\newtheorem{thm}{Theorem}[section]
\newtheorem{prop}[thm]{Proposition}
\newtheorem{cor}[thm]{Corollary}
\newtheorem{lem}[thm]{Lemma}
\theoremstyle{definition}
\newtheorem{exam}[thm]{Example}
\newtheorem{rem}[thm]{Remark}
\newcommand{\Z}{\mathbb{Z}}
\newcommand{\G}{\Gamma}
\newcommand{\GH}{\hat{\Gamma}}
\newcommand{\SL}{\textrm{SL}}
\newcommand{\GL}{\textrm{GL}}
\author[N. Imoto]{Nao Imoto}
\author[R. Kobayashi]{Ryoma Kobayashi}
\address[R. Kobayashi]{
Department of General Education,\endgraf
National Institute of Technology, Ishikawa College,\endgraf
Tsubata, Ishikawa, 929-0392, Japan
}
\email{kobayashi\_ryoma@ishikawa-nct.ac.jp}
\thanks{\textit{Key words and phrases}. congruence subgroup, generator, presentation}
\thanks{The second author was supported by JSPS KAKENHI Grant Numbers JP19K14542 and JP22K13920.
}
\begin{document}

\title{Note on the level $d$ principal congruence subgroup of $\SL(n;\Z)$}

\maketitle

\begin{abstract}
The abelianization of the level $d$ principal congruence subgroup $\G_d(n)$ of $\SL(n;\Z)$ was determined by \cite{LS}.
By this result and a result of \cite{Ti}, we can obtain a minimal generating set for $\G_d(n)$.
In this paper, we give a minimal generating set for $\G_d(n)$ and determine the abelianization of $\G_d(n)$, without using the results of \cite{Ti} and \cite{LS}.
Moreover, we give three theorems about $\G_d(n)$.
\end{abstract}

\section{Introduction}

For integers $n\geq1$ and $d\geq2$, let $\G_d(n)$ denote the kernel of the natural homomorphism $\SL(n;\Z)\to\SL(n;\Z/d\Z)$, that is, $\G_d(n)$ is defined as
$$\G_d(n)=\{X\in\SL(n;\Z){\mid}X\equiv{I}\pmod{d}\}.$$
We define $\G_1(n)=\SL(n;\Z)$.
We call $\G_d(n)$ the \textit{level $d$ principal congruence subgroup} of $\SL(n;\Z)$.
Similarly, we define the level $d$ principal congruence subgroup $\GH_d(n)$ of $\GL(n;\Z)$ for $d\geq1$.
When $g\geq3$, for any $X\in\GH_d(n)$, we have $\det{X}=1$ by the definition.
Hence we see $\G_d(n)=\GH_d(n)$ when $d\geq3$.
On the other hand, $\G_d(n)$ is an index $2$ subgroup of $\GH_d(n)$ when $d=1$ or $2$.
Note that for any $X\in\GH_d(n)$, there is a matrix $A$ such that $X=dA+I$.

These groups have been well studied (for instance see \cite{BLS, Me, BMS}).
In addition, these are closely related to the mapping class group of a non-orientable surface (for instance see \cite{MP, GP, HK, KO, Ko2}).
Therefore, they play an important role in research on group theory, topology and related topics.

Lee-Szczarba~\cite{LS} determined the abelianization of $\G_d(n)$ as follows.

\begin{thm}[\cite{LS}]\label{main-2}
 For $n\geq3$ and $d\geq1$, the abelianization of $\G_d(n)$ is isomorphic to $\left(\Z/d\Z\right)^{n^2-1}$.
\end{thm}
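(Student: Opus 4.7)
The plan is to construct an explicit homomorphism $\varphi\colon\G_d(n)\to(\Z/d\Z)^{n^2-1}$, show it is surjective, and argue that its kernel lies in $[\G_d(n),\G_d(n)]$. Each $X\in\G_d(n)$ has the form $X=I+dA$ for a unique integer matrix $A$, so define $\varphi(X)=A\bmod d$. The identity
\begin{equation*}
(I+dA)(I+dB)=I+d(A+B+dAB)
\end{equation*}
shows $\varphi$ is a homomorphism into the additive group of $n\times n$ matrices over $\Z/d\Z$, and expanding $\det(I+dA)=1$ modulo $d^2$ forces $\mathrm{tr}(A)\equiv 0\pmod{d}$. Hence $\varphi$ lands in the trace-zero subspace, which as an abelian group is $(\Z/d\Z)^{n^2-1}$.

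Surjectivity is proved by producing $n^2-1$ explicit preimages of a basis. For $i\neq j$, the elementary matrix $E_{ij}(d):=I+dE_{ij}\in\G_d(n)$ maps to the off-diagonal basis vector $E_{ij}$. For each diagonal basis vector $E_{ii}-E_{jj}$, a small $2\times 2$ block check shows that
\begin{equation*}
D_{ij}:=I+d(E_{ii}-E_{jj})+d(E_{ij}-E_{ji})\in\G_d(n),
\end{equation*}
since the $(i,j)$-block $\bigl(\begin{smallmatrix}1+d & d\\ -d & 1-d\end{smallmatrix}\bigr)$ has determinant $(1+d)(1-d)+d^2=1$. Composed with suitable powers of $E_{ij}(d)$ and $E_{ji}(d)$, these give a preimage of $E_{ii}-E_{jj}$, completing surjectivity of $\varphi$ and simultaneously providing the candidate minimal generating set $\{E_{ij}(d)\}_{i\neq j}\cup\{D_{i,i+1}\}_{i=1}^{n-1}$ of cardinality exactly $n^2-1$.

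The reverse inclusion $\ker\varphi\subseteq[\G_d(n),\G_d(n)]$ is the substantive step. The key computation is that for any distinct $i,j,k$ (which exist because $n\geq 3$) one has
\begin{equation*}
[E_{ik}(d),\,E_{kj}(d)]=I+d^2E_{ij}=E_{ij}(d^2).
\end{equation*}
Since $E_{ij}(d)^d=E_{ij}(d^2)$, each off-diagonal generator has order dividing $d$ in the abelianization. An analogous but more delicate commutator calculation, using products of $E_{ij}(d)$'s together with the $D_{i,i+1}$'s, will show that each $D_{i,i+1}$ likewise has order dividing $d$ abelianly. Provided the $n^2-1$ matrices above actually generate $\G_d(n)$, the abelianization is then a quotient of $(\Z/d\Z)^{n^2-1}$ of the same order, so equality holds and $\G_d(n)^{\mathrm{ab}}\cong(\Z/d\Z)^{n^2-1}$.

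The main obstacle is that parenthetical clause: verifying without invoking Tits' theorem that $\{E_{ij}(d)\}_{i\neq j}\cup\{D_{i,i+1}\}_{i=1}^{n-1}$ generates $\G_d(n)$. I would approach this by a careful row-and-column reduction on an arbitrary $X=I+dA\in\G_d(n)$: first clear the off-diagonal entries modulo $d^2$ using left and right multiplication by $E_{ij}(d)^{\pm 1}$, then clean the diagonal using the $D_{i,i+1}$'s and the determinant-one constraint, and finally reduce the remaining element of $\G_{d^2}(n)$ to the identity using products of the $E_{ij}(d^2)=[E_{ik}(d),E_{kj}(d)]$. The hypothesis $n\geq 3$ enters essentially only through the existence of a third index $k$ in the commutator identity.
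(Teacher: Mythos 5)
Your overall architecture matches the paper's: an explicit surjection of $\G_d(n)$ onto $(\Z/d\Z)^{n^2-1}$ given by reducing $I+dA\mapsto A$ modulo $d$ (the paper's map $\Phi$ is essentially the same map read in the other direction, and its diagonal generators $f_k^d$ play exactly the role of your $D_{1k}$), combined with a generating set of $n^2-1$ elements each of order dividing $d$ in the abelianization. The surjectivity half of your argument is complete and correct, and the final counting step (a finite group that is both a quotient of $(\Z/d\Z)^{n^2-1}$ and surjects onto it must equal it) is sound. But the two remaining steps, which you yourself flag as ``the main obstacle'' and as ``an analogous but more delicate commutator calculation,'' are precisely where all the work of the paper lives, and your sketch of the first one would not go through as written.

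Concretely: (i) \emph{Generation.} The paper devotes all of Section 2 to proving that the $e_{ij}^d$ together with $n-1$ conjugates $e_{k1}e_{1k}^de_{k1}^{-1}$ generate $\G_d(n)$; this rests on the Bass--Milnor--Serre theorem that $\G_d(n)$ is \emph{normally} generated by the $e_{ij}^d$ in $\SL(n;\Z)$, followed by a lengthy verification that the candidate generating set is closed under conjugation by elementary matrices. Your proposed alternative --- row-reduce $X=I+dA$ modulo $d^2$ and then ``reduce the remaining element of $\G_{d^2}(n)$ to the identity using products of the $E_{ij}(d^2)$'' --- fails at the last step: $\G_{d^2}(n)$ is \emph{not} generated by the matrices $I+d^2E_{ij}$ alone. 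Indeed, by the very theorem being proved, the abelianization of $\G_{d^2}(n)$ is $(\Z/d^2\Z)^{n^2-1}$, and the images of the $I+d^2E_{ij}$ span only the $n^2-n$ off-diagonal coordinates, so the subgroup they generate has index at least $d^{2(n-1)}$. Iterating your reduction at levels $d^2,d^4,\dots$ never terminates, so some genuinely different input (normal generation plus conjugation-closure, as in the paper, or Tits' theorem) is unavoidable here. (ii) \emph{The relation $D_{i,i+1}^d\in[\G_d(n),\G_d(n)]$.} One computes $D_{i,i+1}^d\equiv I\pmod{d^2}$, so this reduces to the inclusion $\G_{d^2}(n)\subseteq[\G_d(n),\G_d(n)]$, which is the paper's Proposition 3.1; proving it needs the commutator identity $[E_{ik}(d),E_{kj}(d)]=E_{ij}(d^2)$ \emph{together with} the normal generation of $\G_{d^2}(n)$ by the $E_{ij}(d^2)$ and the normality of the commutator subgroup --- none of which appears in your writeup. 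As it stands, your argument is a correct reduction of the theorem to two nontrivial unproved statements rather than a proof.
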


For $1\leq{i,j}\leq{n}$ with $i\neq{}j$, let $e_{ij}\in\SL(n;\Z)$ be the matrix whose $(i,j)$ entry and the diagonal entries are $1$ and the other entries are $0$.
Lee-Szczarba~\cite{LS} constructed an epimorphism $\G_d(n)\to(\Z/d\Z)^{n^2-1}$ whose kernel is $\G_{d^2}(n)$ and showed that $\G_{d^2}(n)$ is equal to $[\G_d(n),\G_d(n)]$ for $d\ge2$ and $n\ge3$.
In addition, we can also see that the image of the epimorphism is generated by the natural projections of $e_{ij}^d$ and $e_{k1}e_{1k}^de_{k1}^{-1}$ for $1\leq{i,j}\leq{n}$ with $i\neq{}j$ and $2\leq{k}\leq{n}$.
Moreover, Tits~\cite{Ti} showed that $\G_{d^2}(n)$ is a subgroup of the subgroup of $\SL(n;\Z)$ generated by $e_{ij}^d$ for $1\le{i,j}\le{n}$ with $i\neq{}j$.
Therefore we have the following.

\begin{thm}\label{main-1}
For $n\geq3$ and $d\geq1$, $\G_d(n)$ is generated by $e_{ij}^d$ and $e_{k1}e_{1k}^de_{k1}^{-1}$ for $1\leq{i,j}\leq{n}$ with $i\neq{}j$ and $2\leq{k}\leq{n}$.
\end{thm}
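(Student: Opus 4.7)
My plan is to deduce Theorem~\ref{main-1} mechanically from the two results recalled just above the statement. Let $H$ denote the subgroup of $\SL(n;\Z)$ generated by the elements $e_{ij}^d$ (for $1\le i,j\le n$ with $i\ne j$) and $e_{k1}e_{1k}^d e_{k1}^{-1}$ (for $2\le k\le n$). Each such element is manifestly congruent to the identity modulo $d$, so $H\le\G_d(n)$, and the task is to establish the reverse inclusion.

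The first step is to invoke Tits's theorem: $\G_{d^2}(n)$ is contained in the subgroup of $\SL(n;\Z)$ generated by the powers $e_{ij}^d$, all of which already lie in $H$, so $\G_{d^2}(n)\le H$. The second step is to feed in Lee--Szczarba's epimorphism $\varphi\colon\G_d(n)\twoheadrightarrow(\Z/d\Z)^{n^2-1}$, whose kernel is $\G_{d^2}(n)$ and whose image, as noted in the introduction, is generated by the projections $\varphi(e_{ij}^d)$ and $\varphi(e_{k1}e_{1k}^d e_{k1}^{-1})$. Hence $\varphi(H)=(\Z/d\Z)^{n^2-1}=\varphi(\G_d(n))$. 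Combining $\ker\varphi=\G_{d^2}(n)\le H$ with $\varphi(H)=\varphi(\G_d(n))$ yields $H=\G_d(n)$, as required.

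The only mildly delicate point is the second step, namely checking that the listed $n(n-1)+(n-1)=n^2-1$ elements project onto a generating set of the abelianization; the count already matches the rank of $(\Z/d\Z)^{n^2-1}$, and Lee--Szczarba's epimorphism is set up precisely so that these elements hit the standard basis, so there is no genuine obstacle here. In summary, the proof of Theorem~\ref{main-1} is little more than a short diagram chase built on top of the two cited inputs, and no new computation with elementary matrices is needed.
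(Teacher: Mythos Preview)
Your argument is correct and is, in fact, exactly the derivation that the paper sketches in its introduction (the paragraph immediately preceding the statement of the theorem). However, this is \emph{not} the proof the paper actually gives. The whole point of the paper, stated explicitly in the introduction, is to establish Theorem~\ref{main-1} and then Theorem~\ref{main-2} \emph{without} invoking the results of Tits and Lee--Szczarba; your proof proposal uses both as black boxes.

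The paper's own proof (Section~\ref{thm-1}) is a direct elementary-matrix computation. It introduces an intermediate set $X_2=\{e_{ji}^m e_{ij}^d e_{ji}^{-m}\mid m=0,1\}$ generating a subgroup $G_2$, and first shows $G_2=\G_d(n)$ by appealing to the Bass--Milnor--Serre normal generation of $\G_d(n)$ by the $e_{ij}^d$ and then checking, case by case, that $G_2$ is stable under conjugation by each $e_{i'j'}^{\pm1}$. This step is powered by an explicit commutator identity (Lemma~\ref{md-m}) expressing $e_{ji}^m e_{ij}^d e_{ji}^{-m}$ as a product of elements of $X_2$, together with Lemma~\ref{2d-2} handling the exponents $m=2,-1$. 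Finally the same identity is used with $k=1$ to show that every element of $X_2$ lies in the group generated by $X_1$, giving $G_1\supset G_2$.

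What each approach buys: yours is a two-line diagram chase, but it imports the abelianization computation and Tits's theorem, so it cannot serve as an independent input to the proof of Theorem~\ref{main-2}. The paper's approach is longer and computational, but it is self-contained (modulo Bass--Milnor--Serre) and, crucially, it allows the authors to then deduce Theorem~\ref{main-2} from Theorem~\ref{main-1} rather than the other way round, avoiding circularity.
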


By Theorem~\ref{main-1}, $\G_d(n)$ can be generated by $n^2-1$ elements.
Therefore from Theorem~\ref{main-2}, we have the following.

\begin{cor}
The generating set for $\G_d(n)$ in Theorem~\ref{main-1} is minimal.
\end{cor}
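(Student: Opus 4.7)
The plan is to derive the minimality from Theorem~\ref{main-2} by passing to the abelianization. The key point is that any generating set of $\G_d(n)$ descends to a generating set of the abelianization $\G_d(n)/[\G_d(n),\G_d(n)]\cong(\Z/d\Z)^{n^2-1}$, so it suffices to bound below the number of generators required for this abelian group and to check that Theorem~\ref{main-1} attains that bound.

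First I would verify the count. The set in Theorem~\ref{main-1} consists of $n(n-1)$ elements $e_{ij}^d$ (one for each ordered pair $(i,j)$ with $i\neq j$) together with $n-1$ elements $e_{k1}e_{1k}^de_{k1}^{-1}$ (for $2\le k\le n$), giving $n(n-1)+(n-1)=n^2-1$ generators in total.

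Next, suppose $\G_d(n)$ is generated by a set $S$. Since the abelianization map is surjective, the images of the elements of $S$ generate $\G_d(n)^{\mathrm{ab}}\cong(\Z/d\Z)^{n^2-1}$ by Theorem~\ref{main-2}. To show $|S|\ge n^2-1$ when $d\ge2$, I would pick any prime $p$ dividing $d$ and consider the further surjection $(\Z/d\Z)^{n^2-1}\to(\Z/p\Z)^{n^2-1}$ given by reduction mod $p$. The target is an $\mathbb{F}_p$-vector space of dimension $n^2-1$, so any generating set of it (as an abelian group, equivalently as an $\mathbb{F}_p$-vector space) has at least $n^2-1$ elements. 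Pulling back, $(\Z/d\Z)^{n^2-1}$ needs at least $n^2-1$ generators, and so does $\G_d(n)$. Matching this lower bound against the count above yields the minimality.

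I do not expect any real obstacle: once Theorem~\ref{main-2} is granted, the argument reduces to elementary linear algebra over $\mathbb{F}_p$. The only mildly delicate point is choosing the right prime $p\mid d$ in order to convert the generation question for $(\Z/d\Z)^{n^2-1}$ into a dimension count, but this is entirely standard.
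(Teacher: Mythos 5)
Your argument is correct and is essentially the one the paper intends: the paper simply observes that the set in Theorem~\ref{main-1} has $n^2-1$ elements and invokes Theorem~\ref{main-2}, leaving implicit the standard fact (which you spell out via reduction modulo a prime $p\mid d$) that $(\Z/d\Z)^{n^2-1}$ cannot be generated by fewer than $n^2-1$ elements. Your explicit restriction to $d\ge2$ is appropriate, since for $d=1$ the abelianization is trivial and the claim fails.
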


In this paper, we prove first Theorem~\ref{main-1} and then Theorem~\ref{main-2}, without using the results of \cite{Ti} and \cite{LS}.
In addition, we prove the following three theorems.

\begin{thm}\label{main-3}
Let $E_{ij}=e_{ij}^2$ and $F_{kl}=e_{kl}^2\cdot{}e_{lk}^{-2}\cdot{}e_{lk}e_{kl}^2e_{lk}^{-1}$.
For $n\geq2$, $\G_2(n)$ is generated by $E_{ij}$ and $F_{1k}$ for $1\leq{i,j}\leq{n}$ with $i\neq{}j$ and $2\leq{k}\leq{n}$.
The defining relators are as follows.
\begin{enumerate}
\item	$F_{1i}^2$, $[E_{1i},F_{1i}]$, $[E_{i1},F_{1i}]$ when $n\geq2$,
\item	$[E_{ij},E_{ik}]$, $[E_{ij},E_{kj}]$, $[E_{ij},E_{jk}]E_{ik}^{-2}$, $(F_{1i}F_{1j})^2$, $(E_{1i}F_{1j})^2$, $(E_{ij}F_{1j})^2$, $(E_{i1}F_{1j})^2$, $(E_{ij}F_{1i})^2$ when $n\geq3$,
\item	$[E_{ij},E_{kl}]$ when $n\geq4$,
\item	$(E_{ji}^{-1}E_{ij}E_{kj}^{-1}E_{jk}E_{ik}^{-1}E_{ki})^2$ with $i<j<k$, when $n\geq3$,
\end{enumerate}
where indices $1$, $i$, $j$, $k$ and $l$ are distinct.
\end{thm}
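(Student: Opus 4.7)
The proof of Theorem~\ref{main-3} splits into three phases: generation, verification of the listed relators, and completeness of the relator set.

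\textbf{Generation.} This is immediate from Theorem~\ref{main-1} with $d=2$: that result gives
$\G_2(n) = \langle e_{ij}^2,\; e_{k1}e_{1k}^2 e_{k1}^{-1}\rangle$,
and unwinding the definition of $F_{1k}$ shows
$e_{k1}e_{1k}^2 e_{k1}^{-1} = E_{k1}\cdot E_{1k}^{-1}\cdot F_{1k}$,
so $\{E_{ij},F_{1k}\}$ also generates $\G_2(n)$.

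\textbf{Verification of the relators.} Each listed word reduces to the identity by a direct matrix computation, based on the standard identities $[e_{ij},e_{kl}] = 1$ for $\{i,j\}\cap\{k,l\} = \emptyset$ and $[e_{ij},e_{jk}] = e_{ik}$. A short calculation shows that $F_{1i}$ is the diagonal sign matrix equal to $-1$ in positions $1$ and $i$ and $+1$ elsewhere; this simultaneously gives $F_{1i}^2 = 1$, the commutators $[E_{1i},F_{1i}] = [E_{i1},F_{1i}] = 1$, and every $(EF)^2$ and $(F_{1i}F_{1j})^2$ relator in (1)-(2) (each simply says that conjugation by the sign matrix $F_{1k}$ inverts a suitable $E$-generator). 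Squaring the Chevalley identity $[e_{ij},e_{jk}] = e_{ik}$ yields $[E_{ij},E_{jk}] = E_{ik}^2$, which is the relator $[E_{ij},E_{jk}]E_{ik}^{-2}$ in (2). The disjoint-index commutators in (3) and the ``triangle'' relator in (4) are similar (though more elaborate) calculations.

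\textbf{Completeness.} Let $G$ be the group given by the presentation, and $\varphi\colon G\to\G_2(n)$ the surjection from the previous two steps. To prove $\varphi$ is injective, the plan is to apply the Reidemeister-Schreier method to the short exact sequence
\begin{equation*}
1 \to \G_2(n) \to \SL(n;\Z) \to \SL(n;\Z/2\Z) \to 1,
\end{equation*}
using a known finite presentation of $\SL(n;\Z)$ on the generators $e_{ij}$ and a Schreier transversal of lifts of $\SL(n;\Z/2\Z)$. One checks that each Schreier generator of $\G_2(n)$ is expressible in the $E_{ij},F_{1k}$, and that each rewritten Steinberg relator is a consequence of the relators in (1)-(4). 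Since the index $|\SL(n;\Z/2\Z)|$ is finite, this is a finite combinatorial task.

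\textbf{Main obstacle.} The hard part is the last verification: one must run every Steinberg relation through every coset representative and then reduce the resulting word using only the listed relators. The shape of the relator list is dictated by this reduction---in particular, the triangle relator in (4) (restricted to $i<j<k$) is expected to encode the highest-order interaction among $E_{ij},E_{jk},E_{ik}$ arising when a cyclic Steinberg relation is lifted to differently signed cosets, and controlling these lifts in a uniform way will be the key technical step.
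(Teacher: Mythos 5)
Your overall strategy (exhibit the generators, check the relators hold, then prove completeness by Reidemeister--Schreier) is the right shape, and your generation and verification steps are essentially fine: the identity $e_{k1}e_{1k}^2e_{k1}^{-1}=E_{k1}E_{1k}^{-1}F_{1k}$ is correct, and your observation that $F_{1i}$ is the diagonal sign matrix with $-1$ in positions $1$ and $i$ does dispose of all the relators in (1)--(2) of the form $F_{1i}^2$, $(F_{1i}F_{1j})^2$ and $(EF)^2$. (One small caveat: you invoke Theorem~\ref{main-1} for generation, but that theorem is stated only for $n\ge3$, whereas Theorem~\ref{main-3} covers $n=2$ as well.)

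The genuine gap is in the completeness step, which is where all the content of the theorem lies and which you explicitly leave undone. Moreover, the extension you propose to rewrite, namely $1\to\G_2(n)\to\SL(n;\Z)\to\SL(n;\Z/2\Z)\to1$, makes the task far harder than it needs to be: the transversal has $|\SL(n;\Z/2\Z)|$ elements, so every Steinberg relator must be pushed through an enormous number of cosets, and there is no indication of how the resulting flood of rewritten relators would collapse to the short list (1)--(4). The idea that makes the proof tractable --- and which is missing from your proposal --- is to use the known finite presentation of $\GH_2(n)$, the level $2$ congruence subgroup of $\GL(n;\Z)$ (Theorem~\ref{GH}, due to \cite{Fu, Ko1, HK}), in which $\G_2(n)$ has index \emph{two}. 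With the Schreier transversal $U=\{I,F_1\}$ the Reidemeister--Schreier rewriting becomes a short, fully explicit computation: the Schreier generators are exactly $E_{ij}$ and $F_{1k}$, each relator of $\GH_2(n)$ produces just two rewritten words, and these are precisely the relators (1)--(4) (after observing that one of the rewritten hexagon relators is a consequence of the others). Without either carrying out your large rewriting or switching to this index-two setup, the presentation is not established.
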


\begin{thm}\label{main-4}
\begin{enumerate}
\item	When $d=3$ or $4$, $\G_d(2)$ is generated by $e_{21}^d$ and $e_{21}^me_{12}^de_{21}^{-m}$ for $0\le{m}\le{}d-1$.
\item	$\G_5(2)$ is generated by $e_{21}^5$, $e_{21}^me_{12}^5e_{21}^{-m}$ and $e_{21}^me_{12}^{\pm2}e_{21}^5e_{12}^{\mp2}e_{21}^{-m}$ for $0\le{m}\le4$.
\item	$\G_6(2)$ is generated by $e_{21}^6$, $e_{21}^me_{12}^6e_{21}^{-m}$ and $e_{21}^m[e_{21}^{\pm3},e_{12}^{\pm2}]e_{21}^{-m}$ for $0\le{m}\le5$.
\end{enumerate}
\end{thm}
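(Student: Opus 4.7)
The approach is to apply the Reidemeister--Schreier rewriting process to the two-generator presentation $\SL(2;\Z)=\langle e_{12},e_{21}\rangle$ with respect to the subgroup $\G_d(2)$, whose index is $|\SL(2;\Z/d\Z)|=24,48,120,144$ for $d=3,4,5,6$ respectively. For each $d$ I would (i) fix a Schreier transversal $T_d$ that contains the $d$ powers $e_{21}^0,\ldots,e_{21}^{d-1}$; (ii) compute the Schreier generating set $\{t\cdot s\cdot\overline{ts}^{-1}\mid t\in T_d,\ s\in\{e_{12},e_{21}\}\}$, where $\overline{x}$ denotes the $T_d$-representative of the coset $x\G_d(2)$; and (iii) verify that each such Schreier generator lies in the subgroup generated by the elements listed in the theorem.

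In step~(i) I would complete $T_d$ by a breadth-first traversal of the Cayley graph of $\SL(2;\Z/d\Z)$ rooted at the $e_{21}$-axis. Because $e_{21}$ has order exactly $d$ modulo $d$, the edge at $e_{21}^{d-1}$ in the $e_{21}$-direction yields the Schreier generator $e_{21}^d$, and each edge from $e_{21}^m$ in the $e_{12}$-direction whose target lies off the transversal yields, after reduction, a conjugate $e_{21}^me_{12}^de_{21}^{-m}$. For $d=3,4$ the transversal can be chosen so that the $e_{21}$-axis together with its off-axis $e_{12}$-edges spans the entire Cayley graph; the Schreier generators arising from the ``back-edges'' that close the spanning tree then collapse using the finite-order relations in $\SL(2;\Z)$, leaving only the $d+1$ generators stated in part~(1).

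For $d=5,6$ the Cayley graph of $\SL(2;\Z/d\Z)$ has further vertices that cannot be reached from the $e_{21}$-axis by a single $e_{12}$-edge, so $T_d$ must extend further. For $d=5$ the additional layer of cosets is naturally represented by $e_{21}^me_{12}^{\pm 2}$, and the Schreier generators they contribute reduce to the form $e_{21}^me_{12}^{\pm 2}e_{21}^5e_{12}^{\mp 2}e_{21}^{-m}$; for $d=6$ the reduction instead produces the commutator form $[e_{21}^{\pm 3},e_{12}^{\pm 2}]$, reflecting the fact that $e_{21}^{\pm 3}$ and $e_{12}^{\pm 2}$ have small order in $\SL(2;\Z/6\Z)$. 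The main obstacle is precisely this identification: one must verify, by direct computation inside $\SL(2;\Z/5\Z)$ and $\SL(2;\Z/6\Z)$, that the transversal can be organised so that every residual Schreier generator reduces to one of the listed conjugates and that no further generators are needed. Once this is done, checking that each listed element actually belongs to $\G_d(2)$ is a routine matrix computation modulo $d$.
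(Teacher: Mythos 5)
There is a genuine gap: your proposal is a plan for a Reidemeister--Schreier computation, but the step that carries all the content is never performed. The Schreier generating set for $\G_d(2)$ in $\SL(2;\Z)=\langle e_{12},e_{21}\rangle$ has $2N-(N-1)=N+1$ elements where $N=[\SL(2;\Z):\G_d(2)]$, i.e.\ $25$, $49$, $121$, $145$ elements for $d=3,4,5,6$; the theorem lists only $d+1$ (resp.\ $3d+1$, $5d+1$) generators, so the proof consists precisely in rewriting the remaining hundred-odd Schreier generators as words in the listed ones. You assert this "collapses using the finite-order relations" or "must be verified by direct computation" without doing it. Worse, the structural claim you use to dismiss this for $d=3,4$ is numerically false: the elements $e_{21}^me_{12}^j$ with $0\le m,j\le d-1$ represent only $d^2$ cosets ($9$ for $d=3$, $16$ for $d=4$), while the index is $24$ resp.\ $48$, so the $e_{21}$-axis together with its outgoing $e_{12}$-edges cannot span the coset graph; a majority of the transversal, and the Schreier generators it produces, is unaccounted for. (Also note $e_{21}^me_{12}^{\pm2}$ contributes the Schreier generator $e_{21}^me_{12}^{\pm2}\,e_{12}\,\overline{\,\cdot\,}^{-1}$, not $e_{21}^me_{12}^{\pm2}e_{21}^5e_{12}^{\mp2}e_{21}^{-m}$; getting the latter requires the transversal to contain the whole $e_{21}$-orbit of $e_{21}^me_{12}^{\pm2}$, which again must be checked against the coset structure.)

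The paper's proof avoids coset enumeration entirely and runs a Euclidean descent on the matrix entries: given $A\in\G_d(2)$ with $|A(1,1)|>1$, one chooses $l$ and $0\le m\le d-1$ with $|A(2,1)+(ld-m)A(1,1)|\le\frac12|A(1,1)|$ and checks that left-multiplying by $(e_{21}^me_{12}^de_{21}^{-m})^{\epsilon}(e_{21}^d)^l$ strictly decreases $|A(1,1)|$; for $d=5,6$ the inequality $\epsilon d\cdot|A(2,1)+(ld-m)A(1,1)|<2|A(1,1)|$ can fail, and the extra listed generators are introduced exactly to handle the range where $|A(2,1)+(ld-m)A(1,1)|$ exceeds $\frac25|A(1,1)|$ (resp.\ $\frac13|A(1,1)|$). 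Once $A(1,1)=1$, powers of $e_{21}^d$ and $e_{12}^d$ reduce $A$ to $I$. If you want to keep your route you must actually complete the transversal for each $d$ and rewrite every Schreier generator, which is a substantial finite computation; otherwise you should switch to a descent argument of this kind, where the case analysis for $d=5,6$ makes transparent why the additional generators are needed.
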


\begin{thm}\label{main-5}
Let $l$, $m\ge1$ and $n\geq3$.
\begin{enumerate}
\item	$\displaystyle\G_{\gcd(l,m)}(n)/\G_m(n)$ is isomorphic to $\displaystyle\G_l(n)/\G_{\mathrm{lcm}(l,m)}(n)$.
\item	$\displaystyle\G_{\gcd(l,m)}(n)/\G_{\mathrm{lcm}(l,m)}(n)$ is isomorphic to
	\begin{eqnarray*}
	&&\G_{\gcd(l,m)}(n)/\G_l(n)\times\G_{\gcd(l,m)}(n)/\G_m(n),\\
	&&\G_l(n)/\G_{\mathrm{lcm}(l,m)}(n)\times\G_m(n)/\G_{\mathrm{lcm}(l,m)}(n).
	\end{eqnarray*}
\item	$\displaystyle\G_l(n)/\G_m(n)$ is isomorphic to $\left(\Z/\frac{m}{l}\Z\right)^{n^2-1}$, where $l$ and $m$ satisfy conditions $l\mid{m}$ and $m\mid{l^2}$.
\end{enumerate}
\end{thm}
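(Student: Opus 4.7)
Setting $g=\gcd(l,m)$ and $L=\mathrm{lcm}(l,m)$, my plan is to reduce parts (1) and (2) to standard isomorphism theorems after establishing two identities about congruence subgroups, and to handle (3) by building an explicit epimorphism out of $\G_l(n)$ with kernel $\G_m(n)$. The identities I would prove first are
\[
\G_l(n)\cap\G_m(n)=\G_L(n)\qquad\text{and}\qquad\G_l(n)\cdot\G_m(n)=\G_g(n).
\]
The intersection identity is immediate from the definition, since $X\equiv I\pmod{l}$ and $X\equiv I\pmod{m}$ jointly say $X\equiv I\pmod{L}$. For the product identity, the inclusion $\subseteq$ is clear because $g$ divides both $l$ and $m$. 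For $\supseteq$, I would use B\'ezout to choose $s,t\in\Z$ with $sl+tm=g$ and then express each generator of $\G_g(n)$ supplied by Theorem~\ref{main-1} as a product of an element of $\G_l(n)$ and one of $\G_m(n)$, for instance $e_{ij}^g=(e_{ij}^l)^s(e_{ij}^m)^t$ and similarly for $e_{k1}e_{1k}^ge_{k1}^{-1}$ by distributing the exponent through the conjugation. Since $\G_l(n)$ and $\G_m(n)$ are normal in $\SL(n;\Z)$ as kernels of reduction maps, the set $\G_l(n)\G_m(n)$ is automatically a subgroup.

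Part (1) then follows from the second isomorphism theorem applied to $\G_l(n)$ and $\G_m(n)$ sitting inside $\G_g(n)$. For part (2), inside $G:=\G_g(n)/\G_L(n)$ the subgroups $A:=\G_l(n)/\G_L(n)$ and $B:=\G_m(n)/\G_L(n)$ are normal, intersect trivially, and satisfy $AB=G$, so $G\cong A\times B$; the second direct product decomposition in (2) is then obtained by substituting the isomorphisms of part (1) into each factor.

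For part (3), every $X\in\G_l(n)$ has a unique expression $X=I+lA$ with $A$ an integer matrix, and I would set $\phi(X)=A\bmod(m/l)$. The hypothesis $m\mid l^2$ causes the cross term $l^2A_1A_2$ in the expansion $(I+lA_1)(I+lA_2)=I+l(A_1+A_2)+l^2A_1A_2$ to vanish modulo $m$, so $\phi(X_1X_2)=\phi(X_1)+\phi(X_2)$ and $\phi$ is a homomorphism into the additive group of $n\times n$ matrices over $\Z/(m/l)\Z$. Expanding $\det(I+lA)=1$ gives $l\,\mathrm{tr}(A)\equiv0\pmod{l^2}$, and since $(m/l)\mid l$ this forces $\mathrm{tr}(A)\equiv0\pmod{m/l}$, so the image lies in the trace-zero subgroup of order $(m/l)^{n^2-1}$. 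The kernel equals $\G_m(n)$ by construction. Surjectivity is checked on the generators from Theorem~\ref{main-1}: each $e_{ij}^l$ maps to the off-diagonal matrix unit at $(i,j)$, while a direct calculation gives that $e_{k1}e_{1k}^le_{k1}^{-1}$ maps to the trace-zero matrix with entries $-1$ at $(1,1),(k,1)$ and $+1$ at $(1,k),(k,k)$; varying $k$ and combining with the off-diagonal matrix units then spans the entire trace-zero subgroup modulo $m/l$.

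The main obstacle will be the product identity $\G_l(n)\G_m(n)=\G_g(n)$, since that is the step that essentially invokes Theorem~\ref{main-1}; once it is in hand parts (1) and (2) are formal consequences of the second isomorphism theorem and internal direct product recognition, and part (3) reduces to the routine expansion of $X=I+lA$ together with the generator check above.
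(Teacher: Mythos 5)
Your parts (1) and (2) follow the paper exactly: the same two identities $\G_l(n)\cap\G_m(n)=\G_{\mathrm{lcm}(l,m)}(n)$ and $\G_l(n)\G_m(n)=\G_{\gcd(l,m)}(n)$ (the latter proved via B\'ezout on the normal generators from Theorem~\ref{main-1}), followed by the second isomorphism theorem and internal direct product recognition. Part (3), however, is a genuinely different and correct route. The paper instead uses the short exact sequence $1\to\G_m(n)/\G_{l^2}(n)\to\G_l(n)/\G_{l^2}(n)\to\G_l(n)/\G_m(n)\to1$, the already-established fact that $\G_l(n)/\G_{l^2}(n)\cong(\Z/l\Z)^{n^2-1}$ (Proposition~\ref{[G,G]} plus Theorem~\ref{main-2}), and a cardinality sandwich: both outer quotients admit surjections from $(\Z/\frac{m}{l}\Z)^{n^2-1}$ and $(\Z/\frac{l^2}{m}\Z)^{n^2-1}$ respectively, and the product of the upper bounds equals $l^{n^2-1}$, forcing all the surjections to be isomorphisms. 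Your argument replaces this counting with the explicit map $I+lA\mapsto A\bmod\frac{m}{l}$, which is a homomorphism precisely because $\frac{m}{l}\mid l$, has kernel $\G_m(n)$, lands in the trace-zero matrices by the determinant expansion, and hits all of them by your computation of the images of the generators $e_{ij}^l$ and $e_{k1}e_{1k}^le_{k1}^{-1}$ (the latter giving $E_{kk}-E_{11}$ modulo off-diagonal units, which together span the trace-zero module). Your version is slightly longer but buys an explicit isomorphism and an identification of $\G_l(n)/\G_m(n)$ with the trace-zero matrices over $\Z/\frac{m}{l}\Z$, and it bypasses Theorem~\ref{main-2} entirely; the paper's version is shorter given that the abelianization has already been computed. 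Both are sound.
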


Proofs of Theorems~\ref{main-1}, \ref{main-2}, \ref{main-3}, \ref{main-4} and \ref{main-5} are given in Sections~\ref{thm-1}, \ref{thm-2}, \ref{thm-3}, \ref{thm-4} and \ref{thm-5}, respectively. 

\begin{rem}\label{relation}
We can check that for any integers $s$ and $t$, following relations hold.
\begin{itemize}
\item	$e_{ij}^se_{ik}^t=e_{ik}^te_{ij}^s$,
\item	$e_{ij}^se_{kj}^t=e_{kj}^te_{ij}^s$,
\item	$e_{ij}^se_{kl}^t=e_{kl}^te_{ij}^s$,
\item	$e_{ij}^se_{jk}^t=e_{jk}^te_{ij}^se_{ik}^{st}$,
\item	$e_{ji}e_{ij}^se_{ji}^{-1}=e_{ij}e_{ji}^{-s}e_{ij}^{-1}$,
\end{itemize}
where indices $i$, $j$, $k$ and $l$ are distinct.
\end{rem}

\section{Proof of Theorem~\ref{main-1}}\label{thm-1}

Let $X_1$ and $X_2$ be
\begin{eqnarray*}
X_1&=&\{e_{ij}^d,~e_{k1}e_{1k}^de_{k1}^{-1}\mid1\le{i,j}\le{n}~\textrm{with}~i\ne{j},~2\le{k}\le{n}\},\\
X_2&=&\{e_{ji}^me_{ij}^de_{ji}^{-m}\mid1\le{i,j}\le{n}~\textrm{with}~i\ne{j},~m=0,1\},
\end{eqnarray*}
and $G_1$ and $G_2$ the subgroups of $\SL(n;\Z)$ generated by $X_1$ and $X_2$, respectively.
Since we have $X_1\subset{}X_2\subset\G_d(n)$, it is clear that $G_1\subset{}G_2\subset\G_d(n)$.
We prove that $G_2=\G_d(n)$ first and then $G_1\supset{}G_2$.

First, we prove the following lemma.

\begin{lem}\label{md-m}
For any integer $m$, the following relations hold.
\begin{eqnarray*}
e_{ji}^me_{ij}^de_{ji}^{-m}&=&e_{jk}^me_{kj}^de_{jk}^{-m}\cdot{}e_{ij}^d(e_{ik}^d)^{-m}(e_{ik}e_{ki}^de_{ik}^{-1})^{-m}(e_{ji}^d)^{-m^2}(e_{jk}^d)^{m^2}(e_{ki}^d)^m(e_{kj}^d)^{-1},\\
e_{ji}^me_{ij}^de_{ji}^{-m}&=&e_{ik}^d(e_{jk}^d)^m(e_{kj}e_{jk}^de_{kj}^{-1})^{-m}(e_{ji}^d)^{-m^2}(e_{ki}^d)^{-m^2}(e_{ki}^{-m}e_{ik}^de_{ki}^m)^{-1}e_{ij}^d(e_{kj}^d)^{-m},
\end{eqnarray*}
where $k\neq{i,j}$.
\end{lem}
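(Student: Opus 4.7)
The plan is to reduce the left-hand side to the right-hand side by a direct computation using only the Steinberg-type commutation relations collected in Remark~\ref{relation}. The key idea is that $e_{ij}^d$ admits a commutator-type factorisation involving the auxiliary index $k$; once this factorisation is inserted, conjugation by $e_{ji}^m$ and a patient rearrangement of the resulting word produce the claimed expression.

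For the first equation I would start from the identity
\[
e_{ij}^d = e_{ik}^{-1}\,e_{kj}^{-d}\,e_{ik}\,e_{kj}^d,
\]
obtained from the fourth relation of Remark~\ref{relation} by solving for $e_{ij}^{st}$ with $(s,t)=(1,d)$. Conjugating by $e_{ji}^m$ and applying the first and fourth relations of Remark~\ref{relation} to push $e_{ji}^m$ through each factor gives
\[
e_{ji}^m e_{ik}^{\pm1} e_{ji}^{-m} = e_{ik}^{\pm1} e_{jk}^{\pm m}, \qquad e_{ji}^m e_{kj}^{\pm d} e_{ji}^{-m} = e_{kj}^{\pm d} e_{ki}^{\mp dm},
\]
so that $e_{ji}^m e_{ij}^d e_{ji}^{-m}$ becomes an explicit word in the symbols $e_{ab}^{\ast}$ with $a,b\in\{i,j,k\}$.

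The remaining task is purely combinatorial: one must rearrange this word so that the block $e_{jk}^m e_{kj}^d e_{jk}^{-m}$ sits at the front and the tail collapses to the stated product. Every reorganisation step is an instance of a relation in Remark~\ref{relation}, and it is the fourth relation in the form $e_{jk}^s e_{ki}^t = e_{ki}^t e_{jk}^s e_{ji}^{st}$, applied with $(s,t)=(\pm m,\pm dm)$, that produces the characteristic quadratic-in-$m$ correction factors $(e_{ji}^d)^{-m^2}$ and $(e_{jk}^d)^{m^2}$; the linear-in-$m$ terms $(e_{ik}^d)^{-m}$, $(e_{ki}^d)^m$, and the special conjugate $(e_{ik}e_{ki}^d e_{ik}^{-1})^{-m}$ arise from crossings involving $e_{ik}^{\pm1}$ with the $e_{kj}^{\pm d}$ and $e_{ki}^{\pm dm}$ factors.

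For the second equation I would repeat the argument starting from the alternative factorisation $e_{ij}^d = e_{ik}^{-d}\,e_{kj}^{-1}\,e_{ik}^d\,e_{kj}$, corresponding to $(s,t)=(d,1)$ in the same relation; the same conjugation-and-reshuffling procedure produces the second formula, now with $(e_{kj}e_{jk}^d e_{kj}^{-1})^{-m}$ playing the role previously played by $(e_{ik}e_{ki}^d e_{ik}^{-1})^{-m}$. The main obstacle is the meticulous bookkeeping of the correction terms generated at each Steinberg crossing and the verification that they assemble into exactly the stated form; no idea beyond Remark~\ref{relation} is required.
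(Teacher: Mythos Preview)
Your proposal is correct and follows essentially the same route as the paper. The paper writes the starting factorisations as the commutators $e_{ij}^d=[e_{ik},e_{kj}^d]$ and $e_{ij}^d=[e_{ik}^d,e_{kj}]$ (rather than your equivalent inverse forms $e_{ik}^{-1}e_{kj}^{-d}e_{ik}e_{kj}^d$ and $e_{ik}^{-d}e_{kj}^{-1}e_{ik}^d e_{kj}$), conjugates by $e_{ji}^m$ using exactly the two identities you wrote down, and then performs the Steinberg-relation bookkeeping you describe to extract the quadratic and linear correction terms; your diagnosis of where each factor comes from matches the paper's computation.
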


\begin{proof}
By Remark~\ref{relation}, we calculate
\begin{eqnarray*}
e_{ji}^me_{ij}^de_{ji}^{-m}
&=&e_{ji}^m[e_{ik},e_{kj}^d]e_{ji}^{-m}\\
&=&[e_{ji}^me_{ik}e_{ji}^{-m},e_{ji}^me_{kj}^de_{ji}^{-m}]\\
&=&[e_{ik}e_{jk}^m,e_{kj}^de_{ki}^{-md}]\\
&=&(e_{jk}^me_{ik}e_{kj}^de_{ik}^{-1}e_{jk}^{-m})(e_{ik}e_{jk}^me_{ki}^{-md}e_{jk}^{-m}e_{ik}^{-1})e_{ki}^{md}e_{kj}^{-d}\\
&=&(e_{jk}^me_{kj}^de_{ij}^de_{jk}^{-m})(e_{ik}e_{ki}^{-md}e_{ji}^{-m^2d}e_{ik}^{-1})e_{ki}^{md}e_{kj}^{-d}\\
&=&e_{jk}^me_{kj}^de_{jk}^{-m}\cdot{}e_{jk}^me_{ij}^de_{jk}^{-m}\cdot{}e_{ik}e_{ki}^{-md}e_{ik}^{-1}\cdot{}e_{ik}e_{ji}^{-m^2d}e_{ik}^{-1}\cdot{}e_{ki}^{md}\cdot{}e_{kj}^{-d}\\
&=&e_{jk}^me_{kj}^de_{jk}^{-m}\cdot{}e_{ij}^d\cdot{}e_{ik}^{-md}\cdot{}e_{ik}e_{ki}^{-md}e_{ik}^{-1}\cdot{}e_{ji}^{-m^2d}\cdot{}e_{jk}^{m^2d}\cdot{}e_{ki}^{md}\cdot{}e_{kj}^{-d}\\
&=&e_{jk}^me_{kj}^de_{jk}^{-m}\cdot{}e_{ij}^d(e_{ik}^d)^{-m}(e_{ik}e_{ki}^de_{ik}^{-1})^{-m}(e_{ji}^d)^{-m^2}(e_{jk}^d)^{m^2}(e_{ki}^d)^m(e_{kj}^d)^{-1},
\end{eqnarray*}
\begin{eqnarray*}
e_{ji}^me_{ij}^de_{ji}^{-m}
&=&e_{ji}^m[e_{ik}^d,e_{kj}]e_{ji}^{-m}\\
&=&[e_{ji}^me_{ik}^de_{ji}^{-m},e_{ji}^me_{kj}e_{ji}^{-m}]\\
&=&[e_{ik}^de_{jk}^{md},e_{kj}e_{ki}^{-m}]\\
&=&e_{ik}^de_{jk}^{md}(e_{kj}e_{ki}^{-m}e_{jk}^{-md}e_{ki}^me_{kj}^{-1})(e_{ki}^{-m}e_{kj}e_{ik}^{-d}e_{kj}^{-1}e_{ki}^m)\\
&=&e_{ik}^de_{jk}^{md}(e_{kj}e_{jk}^{-md}e_{ji}^{-m^2d}e_{kj}^{-1})(e_{ki}^{-m}e_{ik}^{-d}e_{ij}^de_{ki}^m)\\
&=&e_{ik}^d\cdot{}e_{jk}^{md}\cdot{}e_{kj}e_{jk}^{-md}e_{kj}^{-1}\cdot{}e_{kj}e_{ji}^{-m^2d}e_{kj}^{-1}\cdot{}e_{ki}^{-m}e_{ik}^{-d}e_{ki}^m\cdot{}e_{ki}^{-m}e_{ij}^de_{ki}^m\\
&=&e_{ik}^d\cdot{}e_{jk}^{md}\cdot{}e_{kj}e_{jk}^{-md}e_{kj}^{-1}\cdot{}e_{ji}^{-m^2d}e_{ki}^{-m^2d}\cdot{}e_{ki}^{-m}e_{ik}^{-d}e_{ki}^m\cdot{}e_{ij}^de_{kj}^{-md}\\
&=&e_{ik}^d(e_{jk}^d)^m(e_{kj}e_{jk}^de_{kj}^{-1})^{-m}(e_{ji}^d)^{-m^2}(e_{ki}^d)^{-m^2}(e_{ki}^{-m}e_{ik}^de_{ki}^m)^{-1}e_{ij}^d(e_{kj}^d)^{-m}.
\end{eqnarray*}
Thus we get the claim.
\end{proof}

Next, we prove the following lemma.

\begin{lem}\label{2d-2}
$e_{ji}^2e_{ij}^de_{ji}^{-2}$ and $e_{ji}^{-1}e_{ij}^de_{ji}$ are in $G_2$.
\end{lem}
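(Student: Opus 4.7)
My approach is to apply the second identity of Lemma~\ref{md-m} in both cases, choosing $m$ so that the only factor not manifestly in $X_2$ can be handled separately.

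For $e_{ji}^{-1}e_{ij}^de_{ji}$, I would substitute $m=-1$ in the second identity (with any auxiliary index $k$ distinct from $i$ and $j$). The critical factor $(e_{ki}^{-m}e_{ik}^de_{ki}^{m})^{-1}$ then becomes $(e_{ki}e_{ik}^de_{ki}^{-1})^{-1}$, which is the inverse of an element of $X_2$. Every remaining factor in the identity is a power of some $e_{ab}^{d}$ or equals $e_{kj}e_{jk}^de_{kj}^{-1}$, all of which lie in $X_2$. Hence $e_{ji}^{-1}e_{ij}^de_{ji}\in G_2$.

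For $e_{ji}^{2}e_{ij}^de_{ji}^{-2}$, I would substitute $m=2$ in the same identity. Now every factor except $(e_{ki}^{-2}e_{ik}^de_{ki}^{2})^{-1}$ is visibly in $X_2$, so the task reduces to showing $e_{ki}^{-2}e_{ik}^de_{ki}^{2}\in G_2$. I write this element as $e_{ki}^{-1}\cdot(e_{ki}^{-1}e_{ik}^de_{ki})\cdot e_{ki}$, and use the first part of the lemma, applied with $(i,j)$ replaced by $(i,k)$, together with the second identity of Lemma~\ref{md-m} at $m=-1$, to write $e_{ki}^{-1}e_{ik}^de_{ki}$ as an explicit product of generators in $X_2$. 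Conjugation by $e_{ki}^{-1}$ distributes over this product, reducing the problem to checking that $e_{ki}^{-1}xe_{ki}\in G_2$ for each generator $x\in X_2$ that appears.

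This last check is a routine application of Remark~\ref{relation}. The cases are: (a) factors $e_{ab}^{\pm d}$ whose indices do not interact with $\{k,i\}$ are fixed under conjugation; (b) those that do interact acquire a single Steinberg correction, producing a product of two generators from $X_2$; (c) the two $X_2$-generators of the form $e_{ba}e_{ab}^de_{ba}^{-1}$ appearing in the expression expand, after conjugating each subfactor, into short products of $X_2$-generators; and (d) the single factor $e_{ik}^{d}$, when conjugated by $e_{ki}^{-1}$, becomes $e_{ki}^{-1}e_{ik}^de_{ki}$, which is precisely the element handled by the first part of the lemma and hence lies in $G_2$. The main obstacle is keeping the bookkeeping straight across the several factors, but each individual computation is short. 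Combining them gives $e_{ki}^{-2}e_{ik}^de_{ki}^{2}\in G_2$, and substituting back into the $m=2$ form of the second identity of Lemma~\ref{md-m} yields $e_{ji}^{2}e_{ij}^de_{ji}^{-2}\in G_2$.
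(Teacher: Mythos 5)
Your proposal is correct, and for the harder half it takes a genuinely different route from the paper. For $e_{ji}^{-1}e_{ij}^de_{ji}$ you do exactly what the paper does: apply the second identity of Lemma~\ref{md-m} at $m=-1$, where every factor lies in $X_2^{\pm1}$. For $e_{ji}^{2}e_{ij}^de_{ji}^{-2}$, however, the paper does not expand at $m=2$; it writes the element as $e_{ji}\,(e_{ji}e_{ij}^de_{ji}^{-1})\,e_{ji}^{-1}$, expands the inner conjugate via the \emph{first} identity of Lemma~\ref{md-m} at $m=1$, and then pushes the outer conjugation by $e_{ji}$ through each of the eight factors using six pre-computed conjugation formulas, each of whose outputs is visibly a product of $X_2$-elements --- a single-level computation with no recursion. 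Your version instead expands at $m=2$ via the second identity, isolates the lone problematic factor $e_{ki}^{-2}e_{ik}^de_{ki}^{2}$, and resolves it by conjugating the already-proved $m=-1$ expansion (for the index pair $(i,k)$, with a fresh auxiliary index) by $e_{ki}^{-1}$. I checked the conjugates you defer to Remark~\ref{relation}: for instance $e_{ki}^{-1}e_{il}^de_{ki}=e_{il}^de_{kl}^{-d}$, $e_{ki}^{-1}(e_{lk}e_{kl}^de_{lk}^{-1})e_{ki}=(e_{lk}e_{kl}^de_{lk}^{-1})e_{ki}^{-d}e_{li}^{-d}$, and $e_{ki}^{-1}(e_{li}e_{il}^de_{li}^{-1})e_{ki}=(e_{li}e_{il}^de_{li}^{-1})e_{kl}^{-d}e_{ki}^{d}$, so each factor does land in $G_2$, with the factor $e_{ik}^d$ reducing to the first part as you say; there is no circularity since part one is proved independently. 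What your route buys is modularity --- only the second identity of Lemma~\ref{md-m} is ever invoked, and the $m=2$ case is reduced to the $m=-1$ case --- at the cost of a nested round of conjugation bookkeeping roughly comparable in length to the paper's direct computation. Both arguments need $n\ge3$ and nothing more, so the proposal is sound as written, though to be a complete proof the case analysis in your final paragraph should be carried out explicitly.
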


\begin{proof}
By Remark~\ref{relation}, we have
\begin{eqnarray*}
e_{ji}e_{jk}e_{kj}^de_{jk}^{-1}e_{ji}^{-1}&=&e_{jk}e_{ji}e_{kj}^de_{ji}^{-1}e_{jk}^{-1}=e_{jk}e_{kj}^de_{ki}^{-d}e_{jk}^{-1}=e_{jk}e_{kj}^de_{jk}^{-1}\cdot{}e_{jk}e_{ki}^{-d}e_{jk}^{-1}\\&=&e_{jk}e_{kj}^de_{jk}^{-1}\cdot{}e_{ki}^{-d}e_{ji}^{-d},\\
e_{ji}e_{ik}^de_{ji}^{-1}&=&e_{ik}^de_{jk}^d,\\
e_{ji}e_{ik}e_{ki}^de_{ik}^{-1}e_{ji}^{-1}&=&e_{ik}e_{ji}e_{jk}e_{ki}^de_{jk}^{-1}e_{ji}^{-1}e_{ik}^{-1}=e_{ik}e_{ji}e_{ki}^de_{ji}^de_{ji}^{-1}e_{ik}^{-1}=e_{ik}e_{ki}^de_{ji}^de_{ik}^{-1}\\&=&e_{ik}e_{ki}^de_{ik}^{-1}\cdot{}e_{ik}e_{ji}^de_{ik}^{-1}=e_{ik}e_{ki}^de_{ik}^{-1}\cdot{}e_{ji}^de_{jk}^{-d},\\
e_{ji}e_{jk}^de_{ji}^{-1}&=&e_{jk}^d,\\
e_{ji}e_{ki}^de_{ji}^{-1}&=&e_{ki}^d,\\
e_{ji}e_{kj}^de_{ji}^{-1}&=&e_{kj}^de_{ki}^{-d}.
\end{eqnarray*}
Hence by Lemma~\ref{md-m}, we have
\begin{eqnarray*}
e_{ji}^2e_{ij}^de_{ji}^{-2}
&=&e_{ji}(e_{ji}e_{ij}^de_{ji}^{-1})e_{ji}\\
&=&e_{ji}(e_{jk}e_{kj}^de_{jk}^{-1}\cdot{}e_{ij}^d(e_{ik}^d)^{-1}(e_{ik}e_{ki}^de_{ik}^{-1})^{-1}(e_{ji}^d)^{-1}e_{jk}^de_{ki}^d(e_{kj}^d)^{-1})e_{ji}^{-1}\\
&=&
(e_{jk}e_{kj}^de_{jk}^{-1}\cdot(e_{ki}^d)^{-1}\cdot(e_{ji}^d)^{-1})\cdot{}
e_{ji}e_{ij}^de_{ji}^{-1}\cdot{}
(e_{ik}^d\cdot{}e_{jk}^d)^{-1}\\
&&(e_{ik}e_{ki}^de_{ik}^{-1}\cdot{}e_{ji}^d\cdot(e_{jk}^d)^{-1})^{-1}\cdot{}
(e_{ji}^d)^{-1}\cdot{}
e_{jk}^d\cdot{}
e_{ki}^d\cdot{}
(e_{kj}^d\cdot(e_{ki}^d)^{-1})^{-1}\\
&\in&G_2.
\end{eqnarray*}
In addition, by Lemma~\ref{md-m}, we have
$$e_{ji}^{-1}e_{ij}^de_{ji}=e_{ik}^d\cdot(e_{jk}^d)^{-1}\cdot{}e_{kj}e_{jk}^de_{kj}^{-1}\cdot{}(e_{ji}^d)^{-1}\cdot(e_{ki}^d)^{-1}\cdot(e_{ki}e_{ik}^de_{ki}^{-1})^{-1}\cdot{}e_{ij}^d\cdot{}e_{kj}^d\in{G_2}.$$
Thus we get the claim.
\end{proof}

We now prove Theorem~\ref{main-1}.

\begin{proof}[Proof of Theorem~\ref{main-1}]

First, we show $G_2=\G_d(n)$.
It is well known that $\SL(n;\Z)$ is generated by $e_{ij}$ for $1\leq{i,j}\leq{n}$ with $i\neq{}j$.
In addition, Bass-Milnor-Serre \cite{BMS} showed that for $n\geq3$ and $d\geq2$, $\G_d(n)$ is normally generated by $e_{ij}^d$ for $1\leq{i,j}\leq{n}$ with $i\neq{}j$, in $\SL(n;\Z)$.
Hence in order to show $G_2=\G_d(n)$, it suffices to show that for any $e_{i^\prime{}j^\prime}$ and $x\in{X_2}$, $e_{i^\prime{}j^\prime}^{\pm1}xe_{i^\prime{}j^\prime}^{\mp1}$ is in $G_2$.
Fix $x=e_{ji}^me_{ij}^de_{ji}^{-m}$, where $m=0$ or $1$.
In the proof, we use Remark~\ref{relation}.
Let $(i^\prime,j^\prime)=(k,l)$, where $k$, $l\neq{}i$, $j$.
Then we have
$$e_{i^\prime{}j^\prime}^{\pm1}xe_{i^\prime{}j^\prime}^{\mp1}=x\in{G_2}.$$
Let $(i^\prime,j^\prime)=(i,k)$, where $k\neq{}i$, $j$.
Then we have
\begin{eqnarray*}
e_{i^\prime{}j^\prime}^{\pm1}xe_{i^\prime{}j^\prime}^{\mp1}
&=&
e_{ik}^{\pm1}e_{ji}^me_{ij}^de_{ji}^{-m}e_{ik}^{\mp1}\\
&=&
e_{ji}^me_{ik}^{\pm1}e_{jk}^{\mp{m}}e_{ij}^de_{jk}^{\pm{m}}e_{ik}^{\mp1}e_{ji}^{-m}\\
&=&
e_{ji}^me_{jk}^{\mp{m}}e_{ij}^de_{jk}^{\pm{m}}e_{ji}^{-m}\\
&=&
e_{ji}^me_{ij}^de_{ik}^{\pm{md}}e_{ji}^{-m}\\
&=&
e_{ji}^me_{ij}^de_{ji}^{-m}\cdot{}e_{ji}^me_{ik}^{\pm{md}}e_{ji}^{-m}\\
&=&
e_{ji}^me_{ij}^de_{ji}^{-m}\cdot{}e_{ik}^{\pm{md}}e_{jk}^{\pm{m^2d}}\\
&=&
e_{ji}^me_{ij}^de_{ji}^{-m}\cdot{}(e_{ik}^d)^{\pm{m}}\cdot(e_{jk}^d)^{\pm{m^2}}\\
&\in&
G_2.
\end{eqnarray*}
Let $(i^\prime,j^\prime)=(k,i)$, where $k\neq{}i$, $j$.
Then we have
\begin{eqnarray*}
e_{i^\prime{}j^\prime}^{\pm1}xe_{i^\prime{}j^\prime}^{\mp1}
&=&
e_{ki}^{\pm1}e_{ji}^me_{ij}^de_{ji}^{-m}e_{ki}^{\mp1}\\
&=&
e_{ji}^me_{ki}^{\pm1}e_{ij}^de_{ki}^{\mp1}e_{ji}^{-m}\\
&=&
e_{ji}^me_{ij}^de_{kj}^{\pm{d}}e_{ji}^{-m}\\
&=&
e_{ji}^me_{ij}^de_{ji}^{-m}\cdot{}e_{ji}^me_{kj}^{\pm{d}}e_{ji}^{-m}\\
&=&
e_{ji}^me_{ij}^de_{ji}^{-m}\cdot{}e_{kj}^{\pm{d}}e_{ki}^{\mp{md}}\\
&=&
e_{ji}^me_{ij}^de_{ji}^{-m}\cdot(e_{kj}^d)^{\pm1}\cdot(e_{ki}^d)^{\mp{m}}\\
&\in&
G_2.
\end{eqnarray*}
Let $(i^\prime,j^\prime)=(k,j)$, where $k\neq{}i$, $j$.
Then we have
\begin{eqnarray*}
e_{i^\prime{}j^\prime}^{\pm1}xe_{i^\prime{}j^\prime}^{\mp1}
&=&
e_{kj}^{\pm1}e_{ji}^me_{ij}^de_{ji}^{-m}e_{kj}^{\mp1}\\
&=&
e_{ji}^me_{kj}^{\pm1}e_{ki}^{\pm{m}}e_{ij}^de_{ki}^{\mp{m}}e_{kj}^{\mp1}e_{ji}^{-m}\\
&=&
e_{ji}^me_{ki}^{\pm{m}}e_{ij}^de_{ki}^{\mp{m}}e_{ji}^{-m}\\
&=&
e_{ji}^me_{ij}^de_{kj}^{\pm{md}}e_{ji}^{-m}\\
&=&
e_{ji}^me_{ij}^de_{ji}^{-m}\cdot{}e_{ji}^me_{kj}^{\pm{md}}e_{ji}^{-m}\\
&=&
e_{ji}^me_{ij}^de_{ji}^{-m}\cdot{}e_{kj}^{\pm{md}}e_{ki}^{\mp{m^2d}}\\
&=&
e_{ji}^me_{ij}^de_{ji}^{-m}\cdot{}(e_{kj}^d)^{\pm{m}}\cdot(e_{ki}^d)^{\mp{m^2}}\\
&\in&
G_2.
\end{eqnarray*}
Let $(i^\prime,j^\prime)=(j,k)$, where $k\neq{}i$, $j$.
Then we have
\begin{eqnarray*}
e_{i^\prime{}j^\prime}^{\pm1}xe_{i^\prime{}j^\prime}^{\mp1}
&=&
e_{jk}^{\pm1}e_{ji}^me_{ij}^de_{ji}^{-m}e_{jk}^{\mp1}\\
&=&
e_{ji}^me_{jk}^{\pm1}e_{ij}^de_{jk}^{\mp1}e_{ji}^{-m}\\
&=&
e_{ji}^me_{ij}^de_{ik}^{\mp{d}}e_{ji}^{-m}\\
&=&
e_{ji}^me_{ij}^de_{ji}^{-m}\cdot{}e_{ji}^me_{ik}^{\mp{d}}e_{ji}^{-m}\\
&=&
e_{ji}^me_{ij}^de_{ji}^{-m}\cdot{}e_{ik}^{\mp{d}}e_{jk}^{\mp{md}}\\
&=&
e_{ji}^me_{ij}^de_{ji}^{-m}\cdot(e_{ik}^d)^{\mp1}\cdot(e_{jk}^d)^{\mp{m}}\\
&\in&
G_2.
\end{eqnarray*}
Let $(i^\prime,j^\prime)=(j,i)$.
Then we have
$$e_{i^\prime{}j^\prime}^{\pm1}xe_{i^\prime{}j^\prime}^{\mp1}=e_{ji}^{\pm1}e_{ji}^me_{ij}^de_{ji}^{-m}e_{ji}^{\mp1}=e_{ji}^{m\pm1}e_{ij}^de_{ji}^{-m\mp1}.$$
Since $m\pm1$ is either $0$, $1$, $2$ or $-1$, by Lemma~\ref{2d-2}, it is in $G_2$.
Let $(i^\prime,j^\prime)=(i,j)$ and $k\neq{}i$, $j$.
Then we have
\begin{eqnarray*}
e_{i^\prime{}j^\prime}^{\pm1}xe_{i^\prime{}j^\prime}^{\mp1}
&=&
e_{ij}^{\pm1}e_{ji}^me_{ij}^de_{ji}^{-m}e_{ij}^{\mp1}\\
&=&
e_{ij}^{\pm1}e_{ji}^m[e_{ik}^d,e_{kj}]e_{ji}^{-m}e_{ij}^{\mp1}\\
&=&
[e_{ij}^{\pm1}e_{ji}^me_{ik}^de_{ji}^{-m}e_{ij}^{\mp1},e_{ij}^{\pm1}e_{ji}^me_{kj}e_{ji}^{-m}e_{ij}^{\mp1}]\\
&=&
[e_{ij}^{\pm1}e_{ik}^de_{jk}^{md}e_{ij}^{\mp1},e_{ij}^{\pm1}e_{kj}e_{ki}^{-m}e_{ij}^{\mp1}]\\
&=&
[e_{ik}^d\cdot{}e_{ij}^{\pm1}e_{jk}^{md}e_{ij}^{\mp1},e_{kj}\cdot{}e_{ij}^{\pm1}e_{ki}^{-m}e_{ij}^{\mp1}]\\
&=&
[e_{ik}^de_{jk}^{md}e_{ik}^{\pm{md}},e_{kj}e_{ki}^{-m}e_{kj}^{\pm{m}}]\\
&=&
[e_{ik}^{(1\pm{m})d}e_{jk}^{md},e_{kj}^{1\pm{m}}e_{ki}^{-m}]\\
&=&
e_{ik}^{(1\pm{m})d}e_{jk}^{md}\cdot{}e_{kj}^{1\pm{m}}e_{ki}^{-m}\cdot{}e_{jk}^{-md}e_{ik}^{-(1\pm{m})d}\cdot{}e_{ki}^me_{kj}^{-(1\pm{m})}\\
&=&
e_{ik}^{(1\pm{m})d}e_{jk}^{md}\cdot{}e_{kj}^{1\pm{m}}e_{ki}^{-m}e_{jk}^{-md}e_{ki}^me_{kj}^{-(1\pm{m})}\cdot{}e_{ki}^{-m}e_{kj}^{1\pm{m}}e_{ik}^{-(1\pm{m})d}e_{kj}^{-(1\pm{m})}e_{ki}^m\\
&=&
e_{ik}^{(1\pm{m})d}e_{jk}^{md}\cdot{}e_{kj}^{1\pm{m}}e_{jk}^{-md}e_{ji}^{-m^2d}e_{kj}^{-(1\pm{m})}\cdot{}e_{ki}^{-m}e_{ik}^{-(1\pm{m})d}e_{ij}^{(1\pm{m})^2d}e_{ki}^m\\
&=&
e_{ik}^{(1\pm{m})d}e_{jk}^{md}\cdot{}e_{kj}^{1\pm{m}}e_{jk}^{-md}e_{kj}^{-(1\pm{m})}\cdot{}e_{kj}^{1\pm{m}}e_{ji}^{-m^2d}e_{kj}^{-(1\pm{m})}\\
&&
e_{ki}^{-m}e_{ik}^{-(1\pm{m})d}e_{ki}^m\cdot{}e_{ki}^{-m}e_{ij}^{(1\pm{m})^2d}e_{ki}^m\\
&=&
e_{ik}^{(1\pm{m})d}e_{jk}^{md}\cdot{}e_{kj}^{1\pm{m}}e_{jk}^{-md}e_{kj}^{-(1\pm{m})}\cdot{}e_{ji}^{-m^2d}e_{ki}^{-m^2(1\pm{m})d}\\
&&
e_{ki}^{-m}e_{ik}^{-(1\pm{m})d}e_{ki}^m\cdot{}e_{ij}^{(1\pm{m})^2d}e_{kj}^{-m(1\pm{m})^2d}\\
&=&
(e_{ik}^d)^{1\pm{m}}\cdot(e_{jk}^d)^m\cdot(e_{kj}^{1\pm{m}}e_{jk}^de_{kj}^{-(1\pm{m})})^{-m}\cdot(e_{ji}^d)^{-m^2}\cdot(e_{ki}^d)^{-m^2(1\pm{m})}\\
&&
(e_{ki}^{-m}e_{ik}^de_{ki}^{m})^{-(1\pm{m})}\cdot{}(e_{ij}^d)^{(1\pm{m})^2}\cdot(e_{kj}^d)^{-m(1\pm{m})^2}.
\end{eqnarray*}
Since $1\pm{m}$ is either $0$, $1$ or $2$, and $-m$ is either $0$ or $-1$, by Lemma~\ref{2d-2}, $e_{kj}^{1\pm{m}}e_{jk}^de_{kj}^{-(1\pm{m})}$ and $e_{ki}^{-m}e_{ik}^de_{ki}^{m}$ are in $G_2$, and so is $e_{i^\prime{}j^\prime}^{\pm1}xe_{i^\prime{}j^\prime}^{\mp1}$.
Therefore, for any indices $i^\prime$ and $j^\prime$, we conclude that $e_{i^\prime{}j^\prime}^{\pm1}xe_{i^\prime{}j^\prime}^{\mp1}$ is in $G_2$, and so $G_2=\G_d(n)$.

Next, we show $G_1\supset{}G_2$.
It suffices to show that $e_{ji}e_{ij}^de_{ji}^{-1}\in{X_2}$ is in $G_1$.
Clearly $e_{j1}e_{1j}^de_{j1}^{-1}$ is in $G_1$.
In addition, by Remark~\ref{relation}, $e_{1i}e_{i1}^de_{1i}^{-1}=e_{i1}e_{1i}^{-d}e_{i1}^{-1}=(e_{i1}e_{1i}^de_{i1}^{-1})^{-1}$ is in $G_1$.
Suppose $i,j\ge2$.
By Lemma~\ref{md-m}, we have
$$e_{ji}e_{ij}^de_{ji}^{-1}=e_{j1}e_{1j}^de_{j1}^{-1}\cdot{}e_{ij}^d\cdot(e_{i1}^d)^{-1}\cdot(e_{i1}e_{1i}^de_{i1}^{-1})^{-1}\cdot(e_{ji}^d)^{-1}\cdot{}e_{j1}^d\cdot{}e_{1i}^d\cdot(e_{1j}^d)^{-1}\in{G_1}.$$
Therefore we conclude $G_1\supset{}G_2$.

Thus we complete the proof.
\end{proof}

\section{Proof of Theorem~\ref{main-2}}\label{thm-2}

First, we show the following proposition.

\begin{prop}\label{[G,G]}
For $n\geq3$, $d_1\geq1$ and $d_2\geq1$, we have
$$[\G_{d_1}(n),\G_{d_2}(n)]=[\G_{d_1}(n),\GH_{d_2}(n)]=[\GH_{d_1}(n),\GH_{d_2}(n)]=\G_{d_1d_2}(n).$$
\end{prop}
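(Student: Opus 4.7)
The plan is to establish the chain of inclusions
$$\G_{d_1d_2}(n)\subset[\G_{d_1}(n),\G_{d_2}(n)]\subset[\G_{d_1}(n),\GH_{d_2}(n)]\subset[\GH_{d_1}(n),\GH_{d_2}(n)]\subset\G_{d_1d_2}(n),$$
in which the three middle inclusions are immediate from $\G_d(n)\subset\GH_d(n)$, so only the two outer inclusions require proof.

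For the outer inclusion $[\GH_{d_1}(n),\GH_{d_2}(n)]\subset\G_{d_1d_2}(n)$, I would use the observation recorded in the introduction that every $X\in\GH_{d_i}(n)$ has the form $X=I+d_iA$ for some integer matrix $A$. Writing $X=I+d_1A$ and $Y=I+d_2B$, a direct expansion yields
$$XY-YX=d_1d_2(AB-BA),$$
so $XY\equiv YX\pmod{d_1d_2}$. Since $\det(YX)=\pm1$, the inverse $(YX)^{-1}=X^{-1}Y^{-1}$ has integer entries, and the identity $[X,Y]=XY(YX)^{-1}$ then yields $[X,Y]\equiv I\pmod{d_1d_2}$. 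Combined with $\det[X,Y]=1$, this gives $[X,Y]\in\G_{d_1d_2}(n)$, so the subgroup generated by such commutators is contained in $\G_{d_1d_2}(n)$.

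For the harder inclusion $\G_{d_1d_2}(n)\subset[\G_{d_1}(n),\G_{d_2}(n)]$, the plan is to invoke Theorem~\ref{main-1} and verify that each listed generator lies in $[\G_{d_1}(n),\G_{d_2}(n)]$. The key ingredient is the Steinberg-type identity
$$[e_{ij}^s,e_{jk}^t]=e_{ik}^{st}\qquad(i,j,k\text{ pairwise distinct}),$$
which follows in one line from the fourth bullet of Remark~\ref{relation}. Taking $s=d_1$, $t=d_2$ and using $n\geq3$ to choose an auxiliary index $m\neq i,j$, this immediately gives
$$e_{ij}^{d_1d_2}=[e_{im}^{d_1},e_{mj}^{d_2}]\in[\G_{d_1}(n),\G_{d_2}(n)],$$
handling the first family of generators.

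For the remaining generators $e_{k1}e_{1k}^{d_1d_2}e_{k1}^{-1}$, I would first apply the Steinberg identity to write $e_{1k}^{d_1d_2}=[e_{1l}^{d_1},e_{lk}^{d_2}]$ with $l\neq1,k$, and then conjugate by $e_{k1}$ to obtain
$$e_{k1}e_{1k}^{d_1d_2}e_{k1}^{-1}=[e_{k1}e_{1l}^{d_1}e_{k1}^{-1},\,e_{k1}e_{lk}^{d_2}e_{k1}^{-1}].$$
A short computation with Remark~\ref{relation} confirms $e_{k1}e_{1l}^{d_1}e_{k1}^{-1}=e_{1l}^{d_1}e_{kl}^{d_1}\in\G_{d_1}(n)$ and $e_{k1}e_{lk}^{d_2}e_{k1}^{-1}=e_{lk}^{d_2}e_{l1}^{-d_2}\in\G_{d_2}(n)$, placing the whole conjugate in $[\G_{d_1}(n),\G_{d_2}(n)]$. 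The main obstacle is simply the bookkeeping needed to keep every conjugated elementary product inside the correct congruence subgroup; the calculations themselves are short and routine applications of Remark~\ref{relation}.
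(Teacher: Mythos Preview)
Your proof is correct and follows essentially the same strategy as the paper: the same chain of inclusions, the same Steinberg identity $e_{ij}^{d_1d_2}=[e_{ik}^{d_1},e_{kj}^{d_2}]$ for one end, and the same mod-$d_1d_2$ computation for the other. The one economy the paper gains is that, for the inclusion $\G_{d_1d_2}(n)\subset[\G_{d_1}(n),\G_{d_2}(n)]$, it invokes the Bass--Milnor--Serre \emph{normal} generating set $\{e_{ij}^{d_1d_2}\}$ rather than Theorem~\ref{main-1}: since $\G_{d_1}(n)$ and $\G_{d_2}(n)$ are normal in $\SL(n;\Z)$, so is their mutual commutator subgroup, and hence checking that the normal generators lie in it suffices---your separate treatment of the conjugated generators $e_{k1}e_{1k}^{d_1d_2}e_{k1}^{-1}$ becomes unnecessary.
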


\begin{proof}
For any normal generator $e_{ij}^{d_1d_2}\in\G_{d_1d_2}(n)$ appeared at the beginning of the proof of Theorem~\ref{main-1} , by Remark~\ref{relation}, we have that $e_{ij}^{d_1d_2}=[e_{ik}^{d_1},e_{kj}^{d_2}]$ is in $[\G_{d_1}(n),\G_{d_2(n)}]$, and hence it follows that $\G_{d_1d_2}(n)\subset[\G_{d_1}(n),\G_{d_2(n)}]$.
Since we have $[\G_{d_1}(n),\G_{d_2(n)}]\subset[\G_{d_1}(n),\GH_{d_2(n)}]\subset[\GH_{d_1}(n),\GH_{d_2(n)}]$ clearly, we show $[\GH_{d_1}(n),\GH_{d_2(n)}]\subset\G_{d_1d_2}(n)$.
For any $[X,Y]\in[\GH_{d_1}(n),\GH_{d_2(n)}]$, there are matrices $A$, $\tilde{A}$, $B$, $\tilde{B}$ such that $X=d_1A+I$, $X^{-1}=d_1\tilde{A}+I$, $Y=d_2B+I$ and $Y^{-1}=d_2\tilde{B}+I$.
Note that $\det[X,Y]=1$.
We calculate
\begin{eqnarray*}
[X,Y]
&=&
(d_1A+I)(d_2B+I)(d_1\tilde{A}+I)(d_2\tilde{B}+I)\\
&\equiv&
(d_1A+d_2B+I)(d_1\tilde{A}+d_2\tilde{B}+I)\pmod{d_1d_2}\\
&\equiv&
d_1^2A\tilde{A}+d_1A+d_2^2B\tilde{B}+d_2B+d_1\tilde{A}+d_2\tilde{B}+I\pmod{d_1d_2}\\
&=&
(d_1^2A\tilde{A}+d_1A+d_1\tilde{A}+I)+(d_2^2B\tilde{B}+d_2B+d_2\tilde{B}+I)-I\\
&=&
XX^{-1}+YY^{-1}-I\\
&=&
I.
\end{eqnarray*}
Therefore $[X,Y]$ is in $\G_{d_1d_2}(n)$, and hence it follows that $[\GH_{d_1}(n),\GH_{d_2}(n)]\subset\G_{d_1d_2}(n)$.

Thus we get the claim.
\end{proof}

Let $f_k^d=e_{k1}^d(e_{k1}e_{1k}^de_{k1}^{-1})(e_{1k}^d)^{-1}$ and
$$X=\{e_{ij}^d,f_k^d\mid1\le{i,j}\le{n}~\textrm{with}~i\ne{j},~2\le{k}\le{n}\}.$$
Next, we show the following lemma.

\begin{lem}\label{generator}
For $n\geq3$ and $d\geq1$, $\G_d(n)$ is generated by $X$.
\end{lem}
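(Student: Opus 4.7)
The plan is to deduce this lemma directly from Theorem~\ref{main-1}, since the proposed generating set $X$ differs from the generating set
$$X_1 = \{e_{ij}^d,\ e_{k1}e_{1k}^de_{k1}^{-1}\mid 1\le i,j\le n,\ i\ne j,\ 2\le k\le n\}$$
of Theorem~\ref{main-1} only in the replacement of $e_{k1}e_{1k}^de_{k1}^{-1}$ by $f_k^d$. The strategy is therefore just a change of generators: show that $\langle X\rangle=\langle X_1\rangle$, and invoke Theorem~\ref{main-1}.

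First, I would check that $X\subseteq\Gamma_d(n)$, so that $\langle X\rangle\subseteq\Gamma_d(n)$. The elements $e_{ij}^d$ clearly lie in $\Gamma_d(n)$, and $f_k^d$ is a product of $e_{k1}^d$, a $\Gamma_d(n)$-conjugate of $e_{1k}^d$, and $(e_{1k}^d)^{-1}$, hence also lies in $\Gamma_d(n)$. Conversely, $f_k^d$ is a product of elements of $X_1$, so $f_k^d\in\langle X_1\rangle$, giving $\langle X\rangle\subseteq\langle X_1\rangle$.

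The key step is the reverse inclusion. I would solve the defining identity for $f_k^d$ algebraically to get
$$e_{k1}e_{1k}^de_{k1}^{-1}=(e_{k1}^d)^{-1}\cdot f_k^d\cdot e_{1k}^d,$$
which exhibits $e_{k1}e_{1k}^de_{k1}^{-1}$ as a word in $X$ (since $e_{k1}^d,\,f_k^d,\,e_{1k}^d\in X$). Together with the fact that every $e_{ij}^d$ lies in both $X$ and $X_1$, this shows $X_1\subseteq\langle X\rangle$, hence $\langle X_1\rangle\subseteq\langle X\rangle$.

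Combining the two inclusions yields $\langle X\rangle=\langle X_1\rangle$, which equals $\Gamma_d(n)$ by Theorem~\ref{main-1}. There is no real obstacle here; this is a direct substitution of generators and the only content is the one-line rearrangement above.
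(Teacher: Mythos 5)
Your proposal is correct and is essentially identical to the paper's own proof: both reduce the lemma to Theorem~\ref{main-1} by observing that $f_k^d$ and $e_{k1}e_{1k}^de_{k1}^{-1}$ generate the same elements via the rearrangement $e_{k1}e_{1k}^de_{k1}^{-1}=(e_{k1}^d)^{-1}f_k^de_{1k}^d$. No issues.
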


\begin{proof}
Let $G$ be the subgroup of $\SL(n;\Z)$ generated by $X$.
Since we have $X\subset\G_d(n)$, it is clear that $G\subset\G_d(n)$.
We prove $G\supset\G_d(n)$.
Remember the finite generating set $X_1$ of $\G_d(n)$ in Theorem~\ref{main-1}.
$e_{ij}^d\in{X_1}$ is in $X$, and hence it is in $G$.
$e_{k1}e_{1k}^de_{k1}^{-1}\in{X_1}$ is described as $e_{k1}e_{1k}^de_{k1}^{-1}=(e_{k1}^d)^{-1}f_k^de_{1k}^d$, and hence it is in $G$.
Therefore $G\supset\G_d(n)$.
Thus we get the claim.
\end{proof}

\begin{rem}\label{f_k}
Modulo $d^2$, $f_k^d$ is the matrix whose $(1,1)$ entry is $1-d$, $(k,k)$ entry is $1+d$, the other diagonal entries are $1$ and the other entries are $0$.
\end{rem}

We now prove Theorem~\ref{main-2}.

\begin{proof}[Proof of Theorem~\ref{main-2}]
By Proposition~\ref{[G,G]}, we have $\G_d(n)/[\G_d(n),\G_d(n)]=\G_d(n)/\G_{d^2}(n)$.
For any generator $x\in{X_1}$ of $\G_d(n)$ in Theorem~\ref{main-1}, since $x^d$ is in $\G_{d^2}(n)$, it follows that $\G_d(n)/[\G_d(n),\G_d(n)]$ is a $\Z/d\Z$ module.
Hence the natural homomorphism
$$\Phi:\bigoplus_{x\in{X}}\Z/d\Z[x]\to\G_d(n)/[\G_d(n),\G_d(n)]$$
is well-defined and surjective, where $X$ is the generating set for $\G_d(n)$ in Lemma~\ref{generator}.
Note that $:\bigoplus_{x\in{X}}\Z/d\Z[x]$ is isomorphic to $\left(\Z/d\Z\right)^{n^2-1}$.
We show $\ker\Phi=1$.

For any $g\in\bigoplus_{x\in{X}}\Z/d\Z[x]$, there are integers $0\le{m_{ij},m_k}\le{d-1}$ for $1\le{i,j}\le{n}$ with $i\ne{}j$ and $2\le{k}\le{n}$ such that
$$g=\prod_{1\le{i,j}\le{n}~(i\ne{j})}[e_{ij}^d]^{m_{ij}}\prod_{2\le{k}\le{n}}[f_k^d]^{m_k}.$$
We note that
\begin{eqnarray*}
\begin{pmatrix}\bm{a}_1&\cdots&\bm{a}_n\end{pmatrix}e_{ij}^d&=&\begin{pmatrix}\bm{a}_1&\cdots&\bm{a}_{j-1}&\bm{a}_j+d\bm{a}_i&\bm{a}_{j+1}&\cdots&\bm{a}_n\end{pmatrix},\\
\begin{pmatrix}\bm{a}_1&\cdots&\bm{a}_n\end{pmatrix}f_k^d&=&\begin{pmatrix}(1-d)\bm{a}_1&\bm{a}_2&\cdots&\bm{a}_{k-1}&(1+d)\bm{a}_k&\bm{a}_{k+1}&\cdots&\bm{a}_n\end{pmatrix}
\end{eqnarray*}
modulo $d^2$ by Remark~\ref{f_k}.
We see
\begin{eqnarray*}
\prod_{1\le{i,j}\le{n}~(i\ne{j})}(e_{ij}^d)^{m_{ij}}
&=&
\begin{pmatrix}
1&m_{12}d&\cdots&m_{1n}d\\
m_{21}d&1&\ddots&\vdots\\
\vdots&\ddots&\ddots&m_{n-1\:n}d\\
m_{n1}d&\cdots&m_{n\:n-1}d&1
\end{pmatrix}
\pmod{d^2},\\
\prod_{2\le{k}\le{n}}(f_k^d)^{m_k}
&=&
\begin{pmatrix}
1-md&0&\cdots&0\\
0&1+m_2d&\ddots&\vdots\\
\vdots&\ddots&\ddots&0\\
0&\cdots&0&1+m_kd
\end{pmatrix}
\pmod{d^2},
\end{eqnarray*}
where $m=m_2+\cdots+m_k$, and hence
$$\Phi(g)=
\begin{pmatrix}
1-md&m_{12}d&\cdots&m_{1n}d\\
m_{21}d&1+m_2d&\ddots&\vdots\\
\vdots&\ddots&\ddots&m_{n-1\:n}d\\
m_{n1}d&\cdots&m_{n\:n-1}d&1+m_kd
\end{pmatrix}
\pmod{d^2}.
$$
Therefore, $g$ is in $\ker\Phi$ if and only if $m_{ij}=m_k=0$ for $1\le{i,j}\le{n}$ with $i\ne{}j$ and $2\le{k}\le{n}$.
So we conclude $\ker\Phi=1$.

Thus we complete the proof.
\end{proof}

\section{Proof of Theorem~\ref{main-3}}\label{thm-3}

Let $F_k$ be the matrix whose $(k,k)$ entry is $-1$, the other diagonal entries are $1$ and the other entries are $0$.
Note that $F_{kl}=F_kF_l$.
$\GH_2(n)$ has a following presentation.

\begin{thm}\cite{Fu, Ko1, HK}\label{GH}
For $n\geq2$, $\GH_2(n)$ is generated by $E_{ij}$ and $F_k$ for $1\leq{i,j}\leq{n}$ with $i\neq{}j$ and $1\leq{k}\leq{n}$.
The defining relators are as follows.
\begin{enumerate}
\item[(a)]	$F_i^2$ when $n\geq1$,
\item[(b)]	$(F_iF_j)^2$, $(E_{ij}F_i)^2$, $(E_{ij}F_j)^2$ when $g\geq2$,
\item[(c)]	$[E_{ij},E_{ik}]$, $[E_{ij},E_{kj}]$, $[E_{ij},E_{jk}]E_{ik}^{-2}$, $[E_{ij},F_k]$ when $n\geq3$,
\item[(d)]	$[E_{ij},E_{kl}]$ when $n\geq4$,
\item[(e)]	$(E_{ji}^{-1}E_{ij}E_{kj}^{-1}E_{jk}E_{ik}^{-1}E_{ki})^2$ with $i<j<k$, when $n\geq3$,
\end{enumerate}
where indices $i$, $j$, $k$ and $l$ are distinct.
\end{thm}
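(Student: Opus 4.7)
The theorem is a known result from \cite{Fu, Ko1, HK}, so the plan below describes how one would establish it rather than give a new proof. The strategy breaks into two pieces: first show the listed elements generate $\GH_2(n)$, then show the listed relations are defining.

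For generation, given $X \in \GH_2(n)$ write $X = 2A + I$ for some integer matrix $A$. Using the identities of Remark~\ref{relation}, right-multiplying $X$ by a suitable power of $E_{ij} = e_{ij}^2$ clears the $(i,j)$ off-diagonal entry modulo increasing powers of $2$; iterating this Hermite/Smith-type reduction while keeping track that every factor used lies in $\langle E_{ij}, F_k\rangle$ produces an explicit expression for $X$. The remaining diagonal matrix with $\pm 1$ entries is then a product of the $F_k$'s. Verifying that each listed relator (a)--(e) actually holds in $\GH_2(n)$ is routine matrix multiplication using Remark~\ref{relation} together with $F_k^2 = I$.

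For sufficiency of (a)--(e), the natural strategy is Reidemeister--Schreier applied to the inclusion $\GH_2(n) \hookrightarrow \GL(n;\Z)$, starting from a Steinberg-type presentation of $\GL(n;\Z)$ on generators $\{e_{ij}\} \cup \{F_1\}$ and with a Schreier transversal lifted from the finite group $\GL(n;\Z/2\Z)$. Each Schreier generator is then rewritten via the reduction algorithm above as a word in the $E_{ij}$ and $F_k$, and each Schreier relator is shown to be a consequence of (a)--(e). Induction on $n$, using the upper-left-block inclusion $\GH_2(n-1) \hookrightarrow \GH_2(n)$, restricts the case analysis at each step to Schreier relators involving the new index~$n$; the base case $n=2$, where only (a) and (b) appear, is checked by hand.

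The main obstacle is the triangle relator (e), $(E_{ji}^{-1}E_{ij}E_{kj}^{-1}E_{jk}E_{ik}^{-1}E_{ki})^2$, which does not follow from any local commutator structure and whose derivation from the raw Schreier rewrite grows unmanageable. Two routes in the cited literature handle this efficiently. In \cite{HK}, $\GH_2(n)$ is realized via the action on first homology as a quotient of the mapping class group of a suitable non-orientable surface, and (e) appears as the image of a lantern-type geometric identity. In \cite{Fu}, a contractible $2$-complex on which $\GH_2(n)$ acts is constructed, and (e) is read off as the boundary word of a $2$-cell via Brown's presentation theorem. Either route converts the otherwise explosive algebraic bookkeeping into a manageable geometric input.
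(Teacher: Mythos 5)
The paper does not actually prove this statement: Theorem~\ref{GH} is imported verbatim from \cite{Fu, Ko1, HK} and is used only as the \emph{input} to the Reidemeister--Schreier computation in Section~\ref{thm-3}, which runs in the opposite direction (from the given presentation of $\GH_2(n)$ down to a presentation of its index-two subgroup $\G_2(n)$). So there is no internal proof to compare yours against, and citing the sources, as you partly do, would have been acceptable in context.

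Judged as a proof attempt, your outline handles the generation half adequately: the Euclidean column reduction by powers of $E_{ij}$ (using that diagonal entries are odd and off-diagonal entries even, so the division steps stay inside the subgroup) followed by absorbing the residual $\pm1$ diagonal matrix into the $F_k$ is the standard and correct argument, and checking that (a)--(e) hold in $\GH_2(n)$ is indeed routine. The gap is in the sufficiency half, which is the entire content of the theorem. You assert that ``each Schreier relator is shown to be a consequence of (a)--(e)'' without exhibiting any of the rewriting, and you then dispose of the one relator you single out as hard, namely (e), by appealing to the very papers whose theorem you are proving --- as a proof this is circular. The concrete mechanism you propose is also problematic: Reidemeister--Schreier for $\GH_2(n)\le\GL(n;\Z)$ over a transversal indexed by $\GL(n;\Z/2\Z)$ involves an index of $\prod_{i=0}^{n-1}(2^n-2^i)$, so the raw Schreier generating and relator sets are astronomically larger than the target presentation, and the proposed induction on $n$ via the block inclusion $\GH_2(n-1)\hookrightarrow\GH_2(n)$ is not set up to control this (the new Schreier relators are not confined to words involving the index $n$). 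A self-contained proof would require either genuinely carrying out a presentation-from-an-action argument on an explicit simply connected complex with identified stabilizers, or an explicit and bounded Schreier rewriting; as written, the sufficiency of (a)--(e) is not established.
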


We now prove Theorem~\ref{main-3}.

\begin{proof}[Proof of Theorem~\ref{main-3}]
In order to prove Theorem~\ref{main-3}, we use the Reidemeister-Schreier method for the finite presentation of $\GH_2(n)$.

For $X\in\GL(n;\Z)$, let $\overline{X}=I$ or $F_1$ if $\det{X}=1$ or $-1$ respectively, and let $[X]=X\overline{X}^{-1}$.
We take $U=\{I,F_1\}$ as a Schreier transversal for $\G_2(n)$ in $\GH_2(n)$.

$\G_2(n)$ is generated by $\left[yx^{\pm1}\right]$ with $yx^{\pm1}\neq\overline{yx^{\pm1}}$ for any generator $x$ of $\GH_2(n)$ in Theorem~\ref{GH} and $y\in{U}$.
We see
\begin{itemize}
\item	$\left[IE_{ij}^{\pm1}\right]=IE_{ij}^{\pm1}I^{-1}=E_{ij}^{\pm1}$,
\item	$\left[IF_k^{\pm1}\right]=IF_kF_1^{-1}=I$ or $F_{1k}$ if $k=1$ or $k\geq2$, respectively,
\item	$\left[F_1E_{ij}^{\pm1}\right]=F_1E_{ij}^{\pm1}F_1^{-1}=E_{ij}^{\mp1}$ or $E_{ij}^{\pm1}$ if either $i$ or $j$ is $1$ or the others, respectively,
\item	$\left[F_1F_k^{\pm1}\right]=F_1F_kI^{-1}=I$ or $F_{1k}$ if $k=1$ or $k\geq2$, respectively.
\end{itemize}
Therefore $\G_2(n)$ is generated by $E_{ij}$ and $F_{1k}$ for $1\leq{i,j}\leq{n}$ with $i\neq{}j$ and $2\leq{k}\leq{n}$.

The defining relators are
$$\left[yr_1\right]\left[\overline{yr_1}r_2\right]\cdots\left[\overline{yr_1r_2\cdots{r_{l-1}}}{r_l}\right],$$
for any relator $r_1r_2\cdots{}r_l$ of $\GH_2(n)$ in Theorem~\ref{GH} and $y\in{U}$.
We see
\begin{enumerate}
\item[(a)]	\begin{itemize}
		\item	$\left[IF_i\right]\left[F_1F_i\right]=I^2$ or $F_{1i}^2$ if $i=1$ or $i\geq2$, respectively,
		\item	$\left[F_1F_i\right]\left[IF_i\right]=I^2$ or $F_{1i}^2$ if $i=1$ or $i\geq2$, respectively,
		\end{itemize}
\item[(b)]	\begin{itemize}
		\item	$\left[IF_i\right]\left[F_1F_j\right]\left[IF_i\right]\left[F_1F_j\right]=(IF_{1j})^2$, $(F_{1i}I)^2$ or $(F_{1i}F_{1j})^2$ if $i=1$, $j=1$ or the others, respectively,
		\item	$\left[F_1F_i\right]\left[IF_j\right]\left[F_1F_i\right]\left[IF_j\right]=(IF_{1j})^2$, $(F_{1i}I)^2$ or $(F_{1i}F_{1j})^2$ if $i=1$, $j=1$ or the others, respectively,
		\item	$\left[IE_{ij}\right]\left[IF_i\right]\left[F_1E_{ij}\right]\left[F_1F_i\right]=E_{1j}IE_{1j}^{-1}I$, $[E_{i1},F_{1i}]$ or $(E_{ij}F_{1i})^2$ if $i=1$, $j=1$ or the others, respectively,
		\item	$\left[F_1E_{ij}\right]\left[F_1F_i\right]\left[IE_{ij}\right]\left[IF_i\right]=E_{1j}^{-1}IE_{1j}I$, $[E_{i1}^{-1},F_{1i}]$ or $(E_{ij}F_{1i})^2$ if $i=1$, $j=1$ or the others, respectively,
		\item	$\left[IE_{ij}\right]\left[IF_j\right]\left[F_1E_{ij}\right]\left[F_1F_j\right]=[E_{1j},F_{1j}]$, $E_{i1}IE_{i1}^{-1}I$ or $(E_{ij}F_{1j})^2$ if $i=1$, $j=1$ or the others respectively,
		\item	$\left[F_1E_{ij}\right]\left[F_1F_j\right]\left[IE_{ij}\right]\left[IF_j\right]=[E_{1j}^{-1},F_{1j}]$, $E_{i1}^{-1}IE_{i1}I$ or $(E_{ij}F_{1j})^2$ if $i=1$, $j=1$ or the others, respectively,
		\end{itemize}
\item[(c)]	\begin{itemize}
		\item	$\left[IE_{ij}\right]\left[IE_{ik}\right]\left[IE_{ij}^{-1}\right]\left[IE_{ik}^{-1}\right]=[E_{ij},E_{ik}]$,
		\item	$\left[F_1E_{ij}\right]\left[F_1E_{ik}\right]\left[F_1E_{ij}^{-1}\right]\left[F_1E_{ik}^{-1}\right]=[E_{1j}^{-1},E_{1k}^{-1}]$, $[E_{i1}^{-1},E_{ik}]$, $[E_{ij},E_{i1}^{-1}]$ or $[E_{ij},E_{ik}]$ if $i=1$, $j=1$, $k=1$ or the others, respectively,
		\item	$\left[IE_{ij}\right]\left[IE_{kj}\right]\left[IE_{ij}^{-1}\right]\left[IE_{kj}^{-1}\right]=[E_{ij},E_{kj}]$,
		\item	$\left[F_1E_{ij}\right]\left[F_1E_{kj}\right]\left[F_1E_{ij}^{-1}\right]\left[F_1E_{kj}^{-1}\right]=[E_{1j}^{-1},E_{kj}]$, $[E_{i1}^{-1},E_{k1}^{-1}]$, $[E_{ij},E_{1j}^{-1}]$ or $[E_{ij},E_{kj}]$ if $i=1$, $j=1$, $k=1$ or the others, respectively,
		\item	$\left[IE_{ij}\right]\left[IE_{jk}\right]\left[IE_{ij}^{-1}\right]\left[IE_{jk}^{-1}\right]\left[IE_{ik}^{-2}\right]=[E_{ij},E_{jk}]E_{ik}^{-2}$,
		\item	$\left[F_1E_{ij}\right]\left[F_1E_{jk}\right]\left[F_1E_{ij}^{-1}\right]\left[F_1E_{jk}^{-1}\right]\left[F_1E_{ik}^{-2}\right]=[E_{1j}^{-1},E_{jk}]E_{1k}^2$, $[E_{i1}^{-1},E_{1k}^{-1}]E_{ik}^{-2}$, $[E_{ij},E_{j1}^{-1}]E_{i1}^2$ or $[E_{ij},E_{jk}]E_{ik}^{-2}$ if $i=1$, $j=1$, $k=1$ or the others, respectively,
		\item	$\left[IE_{ij}\right]\left[IF_k\right]\left[F_1E_{ij}^{-1}\right]\left[F_1F_k^{-1}\right]=(E_{1j}F_{1k})^2$, $(E_{i1}F_{1k})^2$, $E_{ij}IE_{ij}^{-1}I$ or $[E_{ij},F_{1k}]$ if $i=1$, $j=1$, $k=1$ or the others, respectively,
		\item	$\left[F_1E_{ij}\right]\left[F_1F_k\right]\left[IE_{ij}^{-1}\right]\left[IF_k^{-1}\right]=(E_{1j}^{-1}F_{1k})^2$, $(E_{i1}^{-1}F_{1k})^2$, $E_{ij}IE_{ij}^{-1}I$ or $[E_{ij},F_{1k}]$ if $i=1$, $j=1$, $k=1$ or the others, respectively,
		\end{itemize}
\item[(d)]	\begin{itemize}
		\item	$\left[IE_{ij}\right]\left[IE_{kl}\right]\left[IE_{ij}^{-1}\right]\left[IE_{kl}^{-1}\right]=[E_{ij},E_{kl}]$,
		\item	$\left[F_1E_{ij}\right]\left[F_1E_{kl}\right]\left[F_1E_{ij}^{-1}\right]\left[F_1E_{kl}^{-1}\right]=[E_{1j}^{-1},E_{kl}]$, $[E_{i1}^{-1},E_{kl}]$, $[E_{ij},E_{1l}^{-1}]$, $[E_{ij},E_{k1}^{-1}]$ or $[E_{ij},E_{kl}]$ if $i=1$, $j=1$, $k=1$, $l=1$ or the others, respectively,
		\end{itemize}
\item[(e)]	\begin{itemize}
		\item	$\left(\left[IE_{ji}^{-1}\right]\left[IE_{ij}\right]\left[IE_{kj}^{-1}\right]\left[IE_{jk}\right]\left[IE_{ik}^{-1}\right]\left[IE_{ki}\right]\right)^2=(E_{ji}^{-1}E_{ij}E_{kj}^{-1}E_{jk}E_{ik}^{-1}E_{ki})^2$,
		\item	$\left(\left[F_1E_{ji}^{-1}\right]\left[F_1E_{ij}\right]\left[F_1E_{kj}^{-1}\right]\left[F_1E_{jk}\right]\left[F_1E_{ik}^{-1}\right]\left[F_1E_{ki}\right]\right)^2=(E_{j1}E_{1j}^{-1}E_{kj}^{-1}E_{jk}E_{1k}E_{k1}^{-1})^2$ or
		$(E_{ji}^{-1}E_{ij}E_{kj}^{-1}E_{jk}E_{ik}^{-1}E_{ki})^2$ if $i=1$ or the others, respectively.
		On the other hand, the relator $(E_{j1}E_{1j}^{-1}E_{kj}^{-1}E_{jk}E_{1k}E_{k1}^{-1})^2$ is obtained from other relators $(E_{j1}^{-1}E_{1j}E_{kj}^{-1}E_{jk}E_{1k}^{-1}E_{k1})^2$, $[E_{j1},F_{1j}]$, $[E_{1j},F_{1j}]$, $(E_{kj}F_{1j})^2$, $(E_{jk}F_{1j})^2$, $(E_{1k}F_{1j})^2$, $(E_{k1}F_{1j})^2$, $(E_{j1}F_{1k})^2$, $(E_{1j}F_{1k})^2$, $(E_{kj}F_{1k})^2$, $(E_{jk}F_{1k})^2$, $[E_{1k},F_{1k}]$, $[E_{k1},F_{1k}]$, $F_{1j}^2$ and $F_{1k}^2$, as follows.
		$$(E_{j1}E_{1j}^{-1}E_{kj}^{-1}E_{jk}E_{1k}E_{k1}^{-1})^2=F_{1k}F_{1j}(E_{j1}^{-1}E_{1j}E_{kj}^{-1}E_{jk}E_{1k}^{-1}E_{k1})^2F_{1j}F_{1k}.$$
		\end{itemize}
\end{enumerate}
In summary, $\G_2(n)$ has relators
\begin{enumerate}
\item	$F_{1i}^2$, $[E_{1i},F_{1i}]$, $[E_{i1},F_{1i}]$ when $n\geq2$,
\item	$[E_{ij},E_{ik}]$, $[E_{ij},E_{kj}]$, $[E_{ij},E_{jk}]E_{ik}^{-2}$, $(F_{1i}F_{1j})^2$, $(E_{1i}F_{1j})^2$, $(E_{ij}F_{1j})^2$, $(E_{i1}F_{1j})^2$, $(E_{ij}F_{1i})^2$ when $n\geq3$,
\item	$[E_{ij},E_{kl}]$ when $n\geq4$,
\item	$(E_{ji}^{-1}E_{ij}E_{kj}^{-1}E_{jk}E_{ik}^{-1}E_{ki})^2$ with $i<j<k$, when $n\geq3$.
\end{enumerate}

Thus we complete the proof.
\end{proof}

\begin{rem}
Since $\G_d(n)$ is a finite index subgroup of $\SL(n;\Z)$, $\G_d(n)$ can be finitely presented.
In fact, using the Reidemeister-Schreier method, we can obtain a finite presentation for $\G_d(n)$.
In general, for a finitely presented group $G$ and its subgroup $H$, the Reidemeister-Schreier method is useful when $G/H$ is a small group or an abelian group.
By Example~\ref{exam-thm-5}, since $\G_{2^{l-1}}(n)/\G_{2^l}(n)$ is isomorphic to $\left(\Z/2\Z\right)^{n^2-1}$ for $l\ge2$, we will obtain a finite presentation for $\G_{2^l}(n)$ from a presentation for $\G_{2^{l-1}}(n)$, inductively.
\end{rem}

\section{Proof of Theorem~\ref{main-4}}\label{thm-4}

We denote the $(i,j)$ entry of a matrix $A$ by $A(i,j)$.
For any $A\in\G_d(2)$, we construct a product $X$ of matrices of Theorem~\ref{main-4} such that $XA(1,1)=1$.
Then, we notice that $e_{21}^{-XA(2,1)}XAe_{12}^{-XA(1,2)}=I$.
We note that
\begin{eqnarray*}
e_{12}^lA&=&\begin{pmatrix}A(1,1)+lA(2,1)&A(1,2)+lA(2,2)\\A(2,1)&A(2,2)\end{pmatrix},\\
e_{21}^lA&=&\begin{pmatrix}A(1,1)&A(1,2)\\A(2,1)+lA(1,1)&A(2,2)+lA(1,2)\end{pmatrix}.
\end{eqnarray*}

We now prove Theorem~\ref{main-4}.

\begin{proof}[Proof of Theorem~\ref{main-4}]
\begin{enumerate}
\item	For any $A\in\G_d(2)$, if $A(1,1)>1$, there are integers $l$ and $0\le{m}\le{d-1}$ such that
	$$-\frac{1}{2}|A(1,1)|\le{A(2,1)+(ld-m)A(1,1)}<\frac{1}{2}|A(1,1)|.$$
	Note that if $d=4$, $\displaystyle-\frac{1}{2}|A(1,1)|\neq{}A(2,1)+(ld-m)A(1,1)$.
	We have
	\begin{eqnarray*}
	(e_{21}^me_{12}^de_{21}^{-m})^\epsilon(e_{21}^d)^lA(1,1)
	&=&
	e_{12}^{\epsilon{}d}e_{21}^{ld-m}A(1,1)\\
	&=&
	A(1,1)+\epsilon{}d(A(2,1)+(ld-m)A(1,1)),
	\end{eqnarray*}
	where $\epsilon$ is $-1$ or $1$ if $A(1,1)$ and $A(2,1)+(ld-m)A(1,1)$ are same sign or different sign respectively.
	Then, we see
	$$|A(1,1)|>|(e_{21}^me_{12}^de_{21}^{-m})^\epsilon(e_{21}^d)^lA(1,1)|.$$
	Similarly, if $(e_{21}^me_{12}^de_{21}^{-m})^\epsilon(e_{21}^d)^lA(1,1)>1$, there are integers $l^\prime$, $0\le{m^\prime}\le{d-1}$ and $\epsilon^\prime=\pm1$ such that
	$$|(e_{21}^me_{12}^de_{21}^{-m})^\epsilon(e_{21}^d)^lA(1,1)|>|(e_{21}^{m^\prime}e_{12}^de_{21}^{-m^\prime})^{\epsilon^\prime}(e_{21}^d)^{l^\prime}(e_{21}^me_{12}^de_{21}^{-m})^\epsilon(e_{21}^d)^lA(1,1)|.$$
	Repeating this operation, we can obtain a product $X$ of matrices of Theorem~\ref{main-4}~(1) such that $XA(1,1)=1$.
	Therefore we obtain the claim.
\item	For any $A\in\G_5(2)$, if $A(1,1)>1$, there are integers $l$ and $0\le{m}\le4$ such that
	$$-\frac{1}{2}|A(1,1)|<{A(2,1)+(5l-m)A(1,1)}<\frac{1}{2}|A(1,1)|.$$
	Note that if we suppose $\displaystyle\frac{1}{2}|A(1,1)|=|A(2,1)+(5l-m)A(1,1)|$, then $A(1,1)$ and $A(2,1)$ are multiples of $\displaystyle\frac{1}{2}|A(1,1)|>1$.
	On the other hand, since $A(1,1)$ and $A(2,1)$ are relatively prime, we have $\displaystyle\frac{1}{2}|A(1,1)|\neq|{A(2,1)+(5l-m)A(1,1)}|$.
	When $\displaystyle|A(2,1)+(5l-m)A(1,1)|<\frac{2}{5}|A(1,1)|$, similar to (1), we see
	$$|A(1,1)|>|(e_{21}^me_{12}^5e_{21}^{-m})^\epsilon(e_{21}^5)^lA(1,1)|$$
	for appropriate $\epsilon=\pm1$.
	When $\displaystyle|A(2,1)+(5l-m)A(1,1)|>\frac{2}{5}|A(1,1)|$, we have
	\begin{eqnarray*}
	(e_{21}^me_{12}^{-2\epsilon}e_{21}^5e_{12}^{2\epsilon}e_{21}^{-m})^\epsilon(e_{21}^5)^lA(1,1)
	&=&
	e_{12}^{-2\epsilon}e_{21}^{5\epsilon}e_{12}^{2\epsilon}e_{21}^{5l-m}A(1,1)\\
	&=&
	A(1,1)+2\epsilon(A(2,1)+(5l-m)A(1,1))\\
	&&
	-2\epsilon((A(2,1)+(5l-m)A(1,1))\\
	&&
	+5\epsilon(A(1,1)+2\epsilon(A(2,1)+(5l-m)A(1,1))))\\
	&=&
	-20\epsilon(A(2,1)+(5l-m)A(1,1))-9A(1,1),
	\end{eqnarray*}
	where $\epsilon$ is $-1$ or $1$ if $A(1,1)$ and $A(2,1)+(5l-m)A(1,1)$ are same sign or different sign respectively, and see
	$$|A(1,1)|>|(e_{21}^me_{12}^{-2\epsilon}e_{21}^5e_{12}^{2\epsilon}e_{21}^{-m})^\epsilon(e_{21}^5)^lA(1,1)|.$$
	Note that $\displaystyle|A(2,1)+(5l-m)A(1,1)|\neq\frac{2}{5}|A(1,1)|$ since $\displaystyle{}A(1,1)$ is not an integer.
	If $(e_{21}^me_{12}^{-2\epsilon}e_{21}^5e_{12}^{2\epsilon}e_{21}^{-m})^\epsilon(e_{21}^5)^lA(1,1)>1$, repeating this operation, we can obtain a product $X$ of matrices of Theorem~\ref{main-4}~(2) such that $XA(1,1)=1$.
	Therefore we obtain the claim.
\item	For any $A\in\G_6(2)$, if $A(1,1)>1$, there are integers $l$ and $0\le{m}\le5$ such that
	$$-\frac{1}{2}|A(1,1)|<{A(2,1)+(6l-m)A(1,1)}<\frac{1}{2}|A(1,1)|.$$
	Note that if we suppose $\displaystyle\frac{1}{2}|A(1,1)|=|A(2,1)+(6l-m)A(1,1)|$, then $A(1,1)$ and $A(2,1)$ are multiples of $\displaystyle\frac{1}{2}|A(1,1)|>1$.
	On the other hand, since $A(1,1)$ and $A(2,1)$ are relatively prime, we have $\displaystyle\frac{1}{2}|A(1,1)|\neq|{A(2,1)+(6l-m)A(1,1)}|$.
	When $\displaystyle|A(2,1)+(6l-m)A(1,1)|<\frac{1}{3}|A(1,1)|$, similar to (1) and (2), we see
	$$|A(1,1)|>|(e_{21}^me_{12}^6e_{21}^{-m})^\epsilon(e_{21}^6)^lA(1,1)|$$
	for appropriate $\epsilon=\pm1$.
	When $\displaystyle|A(2,1)+(6l-m)A(1,1)|>\frac{1}{3}|A(1,1)|$, we have
	\begin{eqnarray*}
	e_{21}^m[e_{21}^{3\epsilon},e_{12}^{2\epsilon}]e_{21}^{-m}(e_{21}^6)^lA(1,1)
	&=&
	e_{12}^{2\epsilon}e_{21}^{-3\epsilon}e_{12}^{-2\epsilon}e_{21}^{6l-m}A(1,1)\\
	&=&
	A(1,1)-2\epsilon(A(2,1)+(6l-m)A(1,1))\\
	&&
	+2\epsilon((A(2,1)+(6l-m)A(1,1))\\
	&&
	-3\epsilon(A(1,1)-2\epsilon(A(2,1)+(6l-m)A(1,1))))\\
	&=&
	12\epsilon(A(2,1)+(6l-m)A(1,1))-5A(1,1),
	\end{eqnarray*}
	where $\epsilon$ is $1$ or $-1$ if $A(1,1)$ and $A(2,1)+(6l-m)A(1,1)$ are same sign or different sign respectively, and see
	$$|A(1,1)|>|e_{21}^m[e_{21}^{3\epsilon},e_{12}^{2\epsilon}]e_{21}^{-m}(e_{21}^6)^lA(1,1)|.$$
	Note that $\displaystyle|A(2,1)+(6l-m)A(1,1)|\neq\frac{1}{3}|A(1,1)|$ since $\displaystyle\frac{1}{3}A(1,1)$ is not an integer.
	If $e_{21}^m[e_{21}^{3\epsilon},e_{12}^{2\epsilon}]e_{21}^{-m}(e_{21}^6)^lA(1,1)>1$, repeating this operation, we can obtain a product $X$ of matrices of Theorem~\ref{main-4}~(3) such that $XA(1,1)=1$.
	Therefore we obtain the claim.
\end{enumerate}
Thus we complete the proof.
\end{proof}

\begin{rem}
From the proof of Proposition~\ref{[G,G]}, we have  $[\GH_{d_1}(2),\GH_{d_2}(2)]\subset\G_{d_1d_2}(2)$ for $d_1\ge1$ and $d_2\ge1$.
In addition, since $e_{ij}^{2l}=[e_{ij}^l,F_i]\in[\G_l(n),\GH_2(n)]$, where $F_i$ is defined at the beginning of Section~\ref{thm-2}, by Theorem~\ref{main-4}, we have the followings.
\begin{itemize}
\item	$[\GH_2(2),\G_2(2)]=[\GH_2(2),\GH_2(2)]=\G_4(2)$.
\item	$[\GH_2(2),\G_3(2)]=\G_6(2)$.
\end{itemize}
\end{rem}

\section{Proof of Theorem~\ref{main-5}}\label{thm-5}

First, we show the following lemma.

\begin{lem}\label{lm}
For $l\ge1$, $m\ge1$ and $n\ge3$, we have
\begin{enumerate}
\item	$\G_l(n)\cap\G_m(n)=\G_{\mathrm{lcm}(l,m)}(n)$,
\item	$\G_l(n)\G_m(n)=\G_{\gcd(l,m)}(n)$.
\end{enumerate}
\end{lem}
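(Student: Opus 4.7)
Part (1) is immediate from the defining congruence. A matrix $X\in\SL(n;\Z)$ lies in $\G_l(n)\cap\G_m(n)$ if and only if every entry of $X-I$ is divisible by both $l$ and $m$, which is equivalent to divisibility by $\mathrm{lcm}(l,m)$, i.e.\ to $X\in\G_{\mathrm{lcm}(l,m)}(n)$. So I would dispose of (1) in a single sentence.

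For part (2), set $d=\gcd(l,m)$ and fix a B\'ezout decomposition $d=al+bm$ with $a,b\in\Z$. The inclusion $\G_l(n)\G_m(n)\subseteq\G_d(n)$ is immediate once one notes that $d\mid l$ and $d\mid m$ place both $\G_l(n)$ and $\G_m(n)$ inside $\G_d(n)$, and that the product set $\G_l(n)\G_m(n)$ is actually a subgroup since $\G_l(n)$ and $\G_m(n)$ are normal in $\SL(n;\Z)$ as kernels of reduction maps. The reverse inclusion is where the work lies, and I would attack it by verifying that each generator of $\G_d(n)$ furnished by Theorem~\ref{main-1} already lies in $\G_l(n)\G_m(n)$.

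A generator of the form $e_{ij}^d$ splits via B\'ezout as $e_{ij}^d=(e_{ij}^l)^a(e_{ij}^m)^b$, where $(e_{ij}^l)^a\in\G_l(n)$ and $(e_{ij}^m)^b\in\G_m(n)$. For a generator of the form $e_{k1}e_{1k}^de_{k1}^{-1}$, the same identity applied after conjugation yields $e_{k1}e_{1k}^de_{k1}^{-1}=\bigl(e_{k1}e_{1k}^le_{k1}^{-1}\bigr)^a\bigl(e_{k1}e_{1k}^me_{k1}^{-1}\bigr)^b$, and normality of $\G_l(n)$ and $\G_m(n)$ in $\SL(n;\Z)$ places each conjugated factor in the correct subgroup. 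Since Theorem~\ref{main-1} guarantees these elements generate $\G_d(n)$, the inclusion $\G_d(n)\subseteq\G_l(n)\G_m(n)$ follows.

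Of the two parts, only the reverse inclusion in (2) uses any nontrivial input, namely the explicit generating set of Theorem~\ref{main-1} (which is also where the hypothesis $n\ge3$ enters). The only conceptual point worth flagging is that $\G_l(n)\G_m(n)$ is a subgroup, which rests on normality; beyond that, everything reduces to B\'ezout on a fixed generating set, so I do not anticipate a real obstacle.
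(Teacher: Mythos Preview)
Your argument is correct and follows essentially the same route as the paper: part~(1) is disposed of by the elementary divisibility observation, and part~(2) is handled by the B\'ezout splitting $e_{ij}^{\gcd(l,m)}=(e_{ij}^l)^a(e_{ij}^m)^b$ on generators. The only difference is that the paper invokes the \emph{normal} generating set $\{e_{ij}^d\}$ of Bass--Milnor--Serre directly (so only one type of generator needs checking, and the normal closure inside the normal subgroup $\G_l(n)\G_m(n)$ does the rest), whereas you invoke the larger generating set of Theorem~\ref{main-1} and therefore also treat the conjugated generators $e_{k1}e_{1k}^de_{k1}^{-1}$ separately; this extra case is harmless but not needed.
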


\begin{proof}
\begin{enumerate}
\item	Since $\G_{\mathrm{lcm}(l,m)}(n)$ is a subgroup of $\G_l(n)$ and $\G_m(n)$, it is clear that $\G_l(n)\cap\G_m(n)\supset\G_{\mathrm{lcm}(l,m)}(n)$.
	For any $X\in\G_l(n)\cap\G_m(n)$, since $X\equiv{I}\pmod{l}$ and $X\equiv{I}\pmod{m}$, we have $X\equiv{I}\pmod{\mathrm{lcm}(l,m)}$, and so  $\G_l(n)\cap\G_m(n)\subset\G_{\mathrm{lcm}(l,m)}(n)$.
\item	Since $\G_l(n)$ and $\G_m(n)$ are subgroups of $\G_{\gcd(l,m)}(n)$, it is clear that  $\G_l(n)\G_m(n)\subset\G_{\gcd(l,m)}(n)$.
	For any normal generator $e_{ij}^{\gcd(l,m)}\in\G_{\gcd(l,m)}(n)$, there are integers $a$ and $b$ such that $al+bm=\gcd(l,m)$.
	Hence we have $e_{ij}^{\gcd(l,m)}=e_{ij}^{al+bm}=(e_{ij}^l)^a(e_{ij}^m)^b\in\G_l(n)\G_m(n)$, and so $\G_l(n)\G_m(n)\supset\G_{\gcd(l,m)}(n)$.
\end{enumerate}
Thus we get the claim.
\end{proof}

We now prove Theorem~\ref{main-5}.

\begin{proof}[Proof of Theorem~\ref{main-5}]
\begin{enumerate}
\item	By Lemma~\ref{lm}, we calculate
	\begin{eqnarray*}
	\G_{\gcd(l,m)}(n)/\G_m(n)
	&=&\G_l(n)\G_m(n)/\G_m(n)\\
	&\cong&\G_l(n)/(\G_l(n)\cap\G_m(n))\\
	&=&\G_l(n)/\G_{\mathrm{lcm}(l,m)}(n).
	\end{eqnarray*}
	Therefore we obtain the claim.
\item	By Lemma~\ref{lm} and (1), we calculate
	\begin{eqnarray*}
	\G_{\gcd(l,m)}(n)/\G_{\mathrm{lcm}(l,m)}(n)
	&=&\G_l(n)\G_m(n)/(\G_l(n)\cap\G_m(n))\\
	&\cong&\G_l(n)\G_m(n)/\G_l(n)\times\G_l(n)\G_m(n)/\G_m(n)\\
	&\cong&\G_{\gcd(l,m)}(n)/\G_l(n)\times\G_{\gcd(l,m)}(n)/\G_m(n)\\
	&\cong&\G_l(n)/\G_{\mathrm{lcm}(l,m)}(n)\times\G_m(n)/\G_{\mathrm{lcm}(l,m)}(n).
	\end{eqnarray*}
	Therefore we obtain the claim.
\item	By the condition, we have a short exact sequence
	$$1\to\G_m(n)/\G_{l^2}(n)\to\G_l(n)/\G_{l^2}(n)\to\G_l(n)/\G_m(n)\to1.$$
	By Proposition~\ref{[G,G]} and Theorem~\ref{main-1}, $\G_l(n)/\G_{l^2}(n)$ is isomorphic to $\left(\Z/l\Z\right)^{n^2-1}$.
	Let
	\begin{eqnarray*}
	X&=&\{e_{ij}^l,e_{k1}e_{1k}^le_{k1}^{-1}\mid1\le{i,j\le{n}~\textrm{with}~i\neq{}j,~2\le{k}\le{n}}\},\\
	Y&=&\{e_{ij}^m,e_{k1}e_{1k}^me_{k1}^{-1}\mid1\le{i,j\le{n}~\textrm{with}~i\neq{}j,~2\le{k}\le{n}}\}.
	\end{eqnarray*}
	Since $\G_l(n)/\G_m(n)$ and $\G_m(n)/\G_{l^2}(n)$ are $\Z/\frac{m}{l}\Z$ module and $\Z/\frac{l^2}{m}\Z$ module respectively, there are natural surjections
	\begin{eqnarray*}
	&&\left(\Z/\frac{m}{l}\Z\right)^{n^2-1}\cong\bigoplus_{x\in{X}}\Z/\frac{m}{l}\Z[x]\to\G_l(n)/\G_m(n),\\
	&&\left(\Z/\frac{l^2}{m}\Z\right)^{n^2-1}\cong\bigoplus_{y\in{Y}}\Z/\frac{l^2}{m}\Z[y]\to\G_m(n)/\G_{l^2}(n).
	\end{eqnarray*}
	Hence we have
	$$\left|\left(\Z/\frac{m}{l}\Z\right)^{n^2-1}\right|\ge\left|\G_l(n)/\G_m(n)\right|=\left|\frac{\G_l(n)/\G_{l^2}(n)}{\G_m(n)/\G_{l^2}(n)}\right|\ge\frac{\left|\left(\Z/l\Z\right)^{n^2-1}\right|}{\left|\left(\Z/\frac{l^2}{m}\Z\right)^{n^2-1}\right|}=\left|\left(\Z/\frac{m}{l}\Z\right)^{n^2-1}\right|,$$
	and so $\G_l(n)/\G_m(n)$ is isomorphic to $\left(\Z/\frac{m}{l}\Z\right)^{n^2-1}$.
	Therefore we obtain the claim.
\end{enumerate}
Thus we complete the proof.
\end{proof}

Finally, here are interesting examples of Theorem~\ref{main-5}.

\begin{exam}\label{exam-thm-5}
\begin{enumerate}
\item	When $l$ and $m$ are relatively prime, we have
	\begin{eqnarray*}
	\G_l(n)/\G_{lm}(n)
	&=&\G_l(n)/\G_{\mathrm{lcm}(l,m)}(n)\\
	&\cong&\G_{\gcd(l,m)}(n)/\G_m(n)\\
	&=&\G_1(n)/\G_m(n)\\
	&\cong&\SL(n;\Z/m\Z).
	\end{eqnarray*}
\item	When $l$ and $m$ are relatively prime, we have
	\begin{eqnarray*}
	\SL(n;\Z/lm\Z)
	&\cong&\G_1(n)/\G_{lm}(n)\\
	&=&\G_{\gcd(l,m)}(n)/\G_{\mathrm{lcm}(l,m)}(n)\\
	&\cong&\G_{\gcd(l,m)}(n)/\G_l(n)\times\G_{\gcd(l,m)}(n)/\G_m(n)\\
	&=&\G_1(n)/\G_l(n)\times\G_1(n)/\G_m(n)\\
	&\cong&\SL(n;\Z/l\Z)\times\SL(n;\Z/m\Z).
	\end{eqnarray*}
\item	When $l\equiv0\pmod{m}$, since $l\mid{lm}$ and $lm\mid{l^2}$, we have
	$$\G_l(n)/\G_{lm}(n)\cong\left(\Z/m\Z\right)^{n^2-1}.$$
	For example,
	\begin{eqnarray*}
	\G_{2l}(n)/\G_{4l}(n)&\cong&\left(\Z/2\Z\right)^{n^2-1},\\
	\G_{3l}(n)/\G_{9l}(n)&\cong&\left(\Z/3\Z\right)^{n^2-1},\\
	\G_{4l}(n)/\G_{16l}(n)&\cong&\left(\Z/4\Z\right)^{n^2-1},\\
	&\vdots&
	\end{eqnarray*}
\end{enumerate}
\end{exam}

\section*{Acknowledgements}

The authors would like to express their gratitude to Susumu Hirose and Genki Omori for their valuable comments and meaningful discussions.
The authors also would like to express their gratitude to Andrew Putman for providing useful information on our research.


\end{document}